\newtheorem{thm}{Theorem}[section]
\newtheorem{lem}[thm]{Lemma}
\newtheorem{prop}[thm]{Proposition}
\theoremstyle{definition}
\newtheorem{defn}[thm]{Definition}
\theoremstyle{remark}
\newtheorem{rem}[thm]{Remark}
\numberwithin{equation}{section}
\newcommand{\R}{\mathbb R}
\newcommand{\N}{\mathbb N}
\newcommand{\simuleq}[1]{\left\{\begin{aligned}#1\end{aligned}\right.}%
\begin{document}

\title[]{LAYERED VISCOSITY SOLUTIONS OF NONAUTONOMOUS HAMILTON-JACOBI EQUATIONS: SEMICONVEXITY AND RELATIONS TO CHARACTERISTICS }

\author{NGUYEN HOANG}\thanks{The research of Nguyen Hoang was partially supported by
the NAFOSTED, Vietnam.}

\author {NGUYEN MAU NAM}\thanks{The research of Nguyen Mau Nam was partially supported by
the Simons Foundation under grant \#208785.}

\address{Department of Mathematics, College of Education, Hue University, 3 LeLoi, Hue, Viet Nam}%
\email{nguyenhoanghue@gmail.com       or: hoangboi2000@yahoo.com}%
\address{Department of Mathematics, University of
Texas-Pan American, Edinburg, TX 78539, USA}
\email{nguyenmn@utpa.edu}
%

\keywords{Hamilton-Jacobi equation, Hopf-type formula, layered viscosity solution, semiconvexity}%

\dedicatory{}%
\vskip0.5cm
\begin{abstract}

We construct an explicit representation of viscosity solutions of
the Cauchy problem for the Hamilton-Jacobi equation $(H,\sigma)$ on
a given domain $\Omega= (0,T)\times \R^n.$ It is known that, if the
Hamiltonian $H = H(t,p)$ is not a convex (or concave) function in
$p$, or $H(\cdot, p)$ may change its  sign on $(0,T)$, then the
Hopf-type formula  does not define a viscosity solution on $\Omega.$
Under some assumptions for $H(t,p)$ on the subdomains $(t_i,
t_{i+1})\times \R^n\subset \Omega$, we are able to arrange ``partial
solutions'' given by the Hopf-type formula to get a viscosity
solution on $\Omega.$ Then we study the semiconvexity of the solution as well as its relations to characteristics.
\end{abstract}
\maketitle
\vskip0.5cm

\section{Introduction}
This paper is devoted to constructing a representation formula for
viscosity solutions of the Cauchy problem for the Hamilton-Jacobi
equation $(H,\sigma)$ of the form
\begin{equation}\label{1.1}\frac{\partial u}{\partial t} + H(t,x,D_x u)=0\, , \,\, (t,x)\in \Omega=(0,T)\times \R^n,
\end{equation}
\begin{equation}\label{1.2}
u(0,x)=\sigma(x)\, , \,\, x\in \R^n.
\end{equation}

The classical  Hopf-Lax-Oleinik formula plays an important role in
studying properties of solutions of problem (1.1)-(1.2), where
$H=H(p).$ Some generalized versions of these formulas have been
established, and they are useful tools for investigating problems of
variational calculus, differential games, etc. In most cases, the
data used to construct the formulas are concerned with the convexity
(or concavity) in the global setting. This may meet requirements of
calculus of variation or optimal control problems. Until now, no
representation formula of global solutions of Hamilton-Jacobi equations 
without concerning the convexity/concavity or its related versions
has been constructed.  Nevertheless, in the theory of PDEs, the
convexity posed on the given data is not an obligation in nature.\vspace*{0.05in}

In this paper, we consider nonautonomous Hamilton-Jacobi equations
in a new situation. We suppose that the behavior of the Hamiltonian
$H(t,x,p)$ may vary on each subdomain $[t_i, t_{i+1}]\times
\R^n\times\R^n$ of the domain $[0,T]\times\R^n\times\R^n$, where $$0=
t_1<t_2<\dots <t_k=T.$$ For example, $H(t,x,\cdot)$ may be convex for all
$(t,x)\in[t_{i-1},t_i]\times \R^n$, and then $H(t,x,\cdot)$ changes
into a concave function in the successive subdomain $(t,x)\in [t_i,
t_{i+1}]\times \R^n.$ \vspace*{0.05in}

The paper is structured as follows. In section 2, we introduce a way
to joint viscosity solutions of problem (1.1)-(1.2) on subregions of
the form $[t_i,t_{i+1}]\times\R^n$ in order to obtain a global
solution called a ``layered viscosity solution'' of the problem as a
whole. In section 3, we examine the case where the Hamiltonian
$H=H(t,p)$ is continuous and the initial data $\sigma(x)$ is convex.
Under some assumptions on the given data, we use the Hopf-type
formula for ``partial solutions''
\begin{equation*}\label{1.3}u_i(t,x) =\max_{q\in \R^n}\, \{\langle x,q\rangle -\, \sigma_i^*
(q)-\int_{t_i}^t H(\tau,q)d\tau\},\end{equation*} to build a
``layered viscosity solution'' of the Cauchy problem on the whole
domain. We also prove that the obtained viscosity solution inherits
the semiconvexity  analogous to ``partial solutions''. In section 4,
we study relations between the Hopf-type formula and the ``layered
viscosity solution'' as well as its singularity  based on the
semiconvexity of the solution $u(t,x)$ in connection with the
characteristics.\vspace*{0.05in}

Our notations are standard in the field of nonlinear PDEs. For a positive real number $T$, denote
$\Omega =(0,T)\times \R^n.$ Let $\, |\, .\, |$ and $\langle
.,.\rangle$ be the Euclidean norm and the scalar product in $\R^n$,
respectively. For a function $u:\ \Omega \to \R,$ we denote by
$D_xu$ the gradient of $u$ with respect to variable $x$, i.e., $D_xu
=(u_{x_1},\dots, u_{x_n}).$ \vspace*{0.05in}

A continuous functions $u: [0, T]\times \R^n\to \R$ is called a {\it viscosity solution} of the Cauchy problem (1.1)-(1.2) on $\Omega$ provided that the following hold:\\[1ex]
{\rm (i)} $ u(0,x)=\sigma(x)$ for all $x\in \R^n$;\\
{\rm (ii)} For each $v\in C^1( \Omega)$, if $u-v$ has a local maximum  at a point $(t_0,x_0)\in \Omega,$ then
$$v_t(t_0,x_0) +H(t_0,x_0,D_xv(t_0,x_0)) \le 0,$$
and if $u-v$ has a local minimum  at a point $(t_0,x_0)\in \Omega,$ then
$$ v_t(t_0,x_0) +H(t_0,x_0,D_xv(t_0,x_0)) \ge 0.$$

\section{``Layered viscosity solution" in general cases}

Consider the partial differential equation $F(x,u,Du)=0$ on a domain
$\mathcal O \subset \R^m.$ Suppose that $\mathcal O$ can be divided in two open subsets $\mathcal O_1$ and $\mathcal O_2$ by a $C^1$
surface  $\Gamma$, that is $\mathcal O=\mathcal O_1\cup\mathcal
O_2\cup \Gamma$, then several compatible conditions can be added on
$\mathcal O_i$ and $\Gamma$, as well as $C^1$-solutions  $u_i$  of
the equation $F(x,u, Du)=0$ on $\mathcal O_i\cup \Gamma$ for
$i=1,2$, to get a viscosity solution on whole $\mathcal O$; see
\cite[Theorem~1.3]{[3]}. However, in the case of the Cauchy problem,
the situation seems to be rather simple as follows.

\begin{thm}
Let $H(t,x,p)$ be a continuous function on $[0,T]\times \R^n\times \R^n$ and let $\sigma_1(x),\ \sigma_2(x)$ be continuous functions on $\R^n.$ For $t_1\in (0,T)$, consider the following Cauchy problems:
\begin{equation}\label{I}\simuleq{&u_t +H(t,x,D_xu)=0,\quad (t,x)\in (0,t_1)\times \R^n, \\
&u(0,x)=\sigma_1(x),\quad x\in \R^n,}\end{equation} and
\begin{equation}\label{II}\simuleq{&u_t +H(t,x,D_xu)=0,\quad (t,x)\in (t_1,T)\times \R^n, \\
&u(t_1,x)=\sigma_2(x),\quad x\in \R^n.}\end{equation}

Suppose that problems {\rm (2.1)} and {\rm (2.2)} have viscosity solutions $u_1(t,x)$ and $u_2(t,x)$ on the domains $\Omega_1=(0,t_1)\times\R^n$ and $\Omega_2=(t_1,T)\times \R^n$, respectively, where $\sigma_2(x)=u_1(t_1,x)$ for all $x\in \R^n$. Then the Cauchy problem
\begin{equation}\label{II}\simuleq{&u_t +H(t,x,D_xu)=0,\quad (t,x)\in \Omega=(0,T)\times \R^n, \\
&u(0,x)=\sigma_1(x),\quad x\in \R^n}\end{equation}
has a viscosity solution of the form
\begin{equation}\label{II1} \tilde u(t,x)=\simuleq{&u_1(t,x), \quad  (t,x)\in [0,t_1]\times \R^n, \\
&u_2(t,x), \quad (t,x)\in [t_1,T]\times \R^n.}\end{equation}
\end{thm}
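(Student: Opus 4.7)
My plan is to verify in turn the three ingredients of a viscosity solution for $\tilde u$ on $\Omega=(0,T)\times\R^n$: continuity, the initial condition, and the sub/supersolution inequalities at every interior point. Continuity on $[0,T]\times\R^n$ is immediate from the continuity of $u_1$ on $[0,t_1]\times\R^n$, of $u_2$ on $[t_1,T]\times\R^n$, and the matching $u_1(t_1,x)=\sigma_2(x)=u_2(t_1,x)$; the initial condition $\tilde u(0,x)=u_1(0,x)=\sigma_1(x)$ is tautological. At any $(t_0,x_0)\in\Omega$ with $t_0\ne t_1$, the point lies in one of the open sets $\Omega_1$ or $\Omega_2$, and any local extremum of $\tilde u-v$ there is a local extremum of $u_1-v$ or $u_2-v$, so the viscosity inequalities for $\tilde u$ are inherited.

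The heart of the proof is the case $(t_0,x_0)=(t_1,x_0)$, and my strategy is a one-sided singular penalty that pushes the perturbed extremum strictly into $\Omega_1$, where the viscosity property of $u_1$ is directly available. After the usual strictification $v\leftarrow v+\abs{t-t_1}^2+\abs{x-x_0}^2$, I may assume $\tilde u-v$ has a strict local maximum at $(t_1,x_0)$ on a closed ball $\overline{B_r((t_1,x_0))}$, with a uniform gap $\rho>0$ between this maximum value and $\sup(\tilde u-v)$ over the spherical part of the boundary. I introduce
\[
\psi_\eps(t,x):=v(t,x)+\frac{\eps}{t_1-t},\qquad t<t_1,
\]
so that $u_1-\psi_\eps\to-\infty$ as $t\to t_1^-$. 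Its supremum over $\overline{B_r}\cap\{t\le t_1\}$ is therefore attained at some $(s_\eps,y_\eps)$ with $s_\eps<t_1$; testing against the point $(t_1-\sqrt\eps,x_0)$, where the value is $(\tilde u-v)(t_1,x_0)-\sqrt\eps+o(1)$, defeats the spherical-boundary bound $(\tilde u-v)(t_1,x_0)-\rho$ for small $\eps$, so $(s_\eps,y_\eps)$ is interior and $(s_\eps,y_\eps)\to(t_1,x_0)$ by strictness. Applying the subsolution property of $u_1$ at this interior local maximum gives
\[
v_t(s_\eps,y_\eps)+\frac{\eps}{(t_1-s_\eps)^2}+H(s_\eps,y_\eps,D_xv(s_\eps,y_\eps))\le 0,
\]
and discarding the non-negative middle term and sending $\eps\to0^+$ yields the desired subsolution inequality at $(t_1,x_0)$.

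The supersolution inequality is handled by the mirror penalty $\psi_\eps(t,x)=v(t,x)-\eps/(t_1-t)$, for which $u_1-\psi_\eps\to+\infty$ at the interface, forcing the infimum over $\overline{B_r}\cap\{t\le t_1\}$ to be attained at an interior point $(s_\eps,y_\eps)\in\Omega_1$ converging to $(t_1,x_0)$; the supersolution property of $u_1$ at this point reads
\[
v_t(s_\eps,y_\eps)-\frac{\eps}{(t_1-s_\eps)^2}+H(s_\eps,y_\eps,D_xv(s_\eps,y_\eps))\ge 0,
\]
so $v_t+H\ge\eps/(t_1-s_\eps)^2\ge 0$ at $(s_\eps,y_\eps)$, and passing to the limit gives the inequality at $(t_1,x_0)$. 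The one point where I expect to spend real care is the bookkeeping needed to certify that the perturbed extremum is truly interior to the ball (not on the spherical boundary) and that $(s_\eps,y_\eps)\to(t_1,x_0)$; both follow cleanly from the strictification and the uniform spherical gap, but juggling the two sides of the ball, sphere versus interface $\{t=t_1\}$, is where the rigorous write-up concentrates. Note that it suffices to push the extremum into $\Omega_1$, so $u_2$'s viscosity property is never invoked at the interface, which is what makes the argument symmetric between the sub- and supersolution cases.
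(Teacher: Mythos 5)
Your proof is correct and takes essentially the same route as the paper: away from the interface the inequalities are inherited directly from $u_1$ and $u_2$, and at $t=t_1$ the paper invokes the ``extrema at a terminal time'' property by citing Lemma~4.1 of Crandall--Evans--Lions, whose standard proof is exactly the one-sided penalization $v\pm\eps/(t_1-t)$ that you write out in full. The only difference is that you supply the details of that cited lemma explicitly (correctly, including the sign bookkeeping for $(\psi_\eps)_t$ and the interior-point localization), whereas the paper defers to the reference.
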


\begin{proof}
Under the assumptions made, the function $\tilde u$ defined by (\ref{II1})
is continuous. We will use the {\it ``extrema at a terminal time"}
property of the Cauchy problem. Let $(t_0,x_0)\in \Omega$ and $v\in
C^1(\Omega).$ Assume that $\tilde u -v$ has a local maximum at
$(t_0,x_0).$ If $(t_0,x_0)\in \Omega_i, \ i=1,2,$ then the
restriction of $\tilde u(t,x)$ on $\Omega_i$ coincides with
$u_i(t,x)$. Thus, the inequality
$$v^i_t(t_0,x_0) +H(t_0,x_0,D_xv^i(t_0,x_0)) \le 0,$$
holds since $u_i(t,x)$ is a viscosity subsolution on $\Omega_i$, and
$v^i=v\big |_{\Omega_i}$ is also a test function such that $u_i-v^i$
attains a local maximum at $(t_0,x_0).$ It follows that
$$v_t(t_0,x_0) +H(t_0,x_0,D_xv(t_0,x_0)) \le 0.$$
Now, let $t_0=t_1$ and let $\tilde u-v$ have a local maximum at
$(t_1,x_1)\in \Omega.$ It is obvious that $u_1-v$ also attains a
local maximum at $(t_1,x_1)$ on the set $(0,t_1]\times \R^n.$
Arguing as in the proof of the \cite[Lemma~4.1]{[3]}, we see that
$$v_t(t_0,x_0) +H(t_0,x_0,D_xv(t_0,x_0)) \le 0,$$
that is, $\tilde u(t,x)$ is a viscosity subsolution of the problem. Similarly, we are able to verify that $\tilde u(t,x)$ is a viscosity supersolution.
\end{proof}

We call a solution of problem (1.1)-(1.2) given by (2.4) a {\it
layered viscosity solution} of the problem. In the lemma below, we
will show that a translation of coordinates does not change the
properties of viscosity solutions.

\begin{lem}
Suppose that $H(t,x,p)$ is a continuous function on $[t_1,T]\times
\R^n\times \R^n,\ u(t,x)$ is a continuous function on $[t_1,T]\times
\R^n$, and $\sigma(x)$ is a continuous function on $\R^n.$ Let
\begin{equation*}
H_1(\tau,x,p)=H(t_1+\tau,x,p),\ u_1(\tau,x)=u(t_1+\tau,x), \tau
\in[0,T-t_1]
\end{equation*}
for $x, p\in\R^n$. Then $u(t,x)$ is a viscosity solution of the
Cauchy problem $(H,\sigma)$ on $(t_1,T)\times\R^n$ if and only if
$u_1(\tau,x)$ is a viscosity solution of problem $(H_1, \sigma)$ on
$(0,T-t_1)\times \R^n$.
\end{lem}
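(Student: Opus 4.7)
The plan is to verify the viscosity solution property directly through the change of variable $\tau = t - t_1$. Since this time shift is a smooth invertible bijection between $(t_1,T)\times\R^n$ and $(0,T-t_1)\times\R^n$, the two directions of the equivalence are symmetric: proving ``only if'' suffices, because the converse follows by applying the same argument to the inverse translation $\tau \mapsto t_1+\tau$ replaced by $t\mapsto t-t_1$.

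The initial condition is immediate: by construction $u_1(0,x)=u(t_1,x)$, so the condition $u(t_1,x)=\sigma(x)$ and $u_1(0,x)=\sigma(x)$ are literally the same requirement. The substantive part is transferring the sub/supersolution inequalities. Given a test function $v_1\in C^1((0,T-t_1)\times\R^n)$ such that $u_1-v_1$ attains a local maximum at $(\tau_0,x_0)$, I would define $v(t,x):=v_1(t-t_1,x)$ on $(t_1,T)\times\R^n$. Then $v\in C^1$, and because $(\tau,x)\mapsto(t_1+\tau,x)$ is a homeomorphism preserving the values of $u_1-v_1$ and $u-v$, the function $u-v$ attains a local maximum at $(t_1+\tau_0,x_0)$. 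The chain rule gives
\[
v_t(t_1+\tau_0,x_0)=(v_1)_\tau(\tau_0,x_0),\qquad D_xv(t_1+\tau_0,x_0)=D_xv_1(\tau_0,x_0),
\]
and by definition $H(t_1+\tau_0,x_0,p)=H_1(\tau_0,x_0,p)$ for every $p$. Consequently the subsolution inequality for $u$ at $(t_1+\tau_0,x_0)$,
\[
v_t(t_1+\tau_0,x_0)+H(t_1+\tau_0,x_0,D_xv(t_1+\tau_0,x_0))\le 0,
\]
is term-for-term the subsolution inequality for $u_1$ at $(\tau_0,x_0)$ with test function $v_1$. Conversely, every test function for $u$ at an interior point $(t_0,x_0)$ arises this way by setting $v_1(\tau,x):=v(t_1+\tau,x)$, so the two subsolution conditions are equivalent. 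The supersolution case is verbatim the same with ``local maximum'' replaced by ``local minimum'' and the inequality reversed.

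I do not anticipate any real obstacle: the lemma is essentially a bookkeeping statement saying that the viscosity notion is invariant under smooth reparametrization of time. The only place to exercise care is making the correspondence of test functions and of local extrema precise, which is handled by the remark that the map $(\tau,x)\mapsto(t_1+\tau,x)$ is a $C^1$ diffeomorphism preserving $C^1$-regularity, pointwise values, and the local maximum/minimum structure.
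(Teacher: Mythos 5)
Your proposal is correct and follows exactly the route the paper intends: the paper's proof simply observes that the time translation preserves the class $C^1$ of test functions and declares the rest ``straightforward from the definition,'' which is precisely the bookkeeping you carry out explicitly (correspondence of test functions, preservation of local extrema, chain rule, and $H_1(\tau,x,p)=H(t_1+\tau,x,p)$). Your version is just a fully written-out form of the same argument.
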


\begin{proof}
 Observe that if $v(t,x)\in C^1([t_1,T]\times\R^n)$, then $v_1(\tau,x)=v(t_1+\tau,x)$ is of class $C^1([0,T-t_1]\times\R^n)$. Thus, the proof of the lemma is straightforward using the definition of a viscosity solution.
\end{proof}

\begin{rem}
Suppose that the Hamiltonian $H=H(t,x,p):\ \mathcal O \to \R$ is
continuous, where $\mathcal O=[0,T]\times\R^n\times\R^n.$ Let
$t_1\in (0,T)$ and put $\mathcal O_1=[0,t_1]\times\R^n\times\R^n,$
$\mathcal O_2=[t_1,T]\times\R^n\times\R^n.$ Let $H_1$ (resp. $H_2$)
be the restriction of $H$ on $\mathcal O_1$ (resp. $\mathcal O_2$).
If $H(t,x,p)$ is a convex or concave function with respect to the
third variable for all $(t,x)\in [0,T]\times \R^n,$ a viscosity
solution of the problem $(H,\sigma)$ can be obtained by methods of
variational calculus under some assumptions; see [2] and the
references therein. If the convexity of $H(t,x,p)$ in $p$ is
changed, for example, $H(t,x,p)$ is convex on $\mathcal O_1$ and
concave on $\mathcal O_2$, then the afore-mentioned method cannot
be applied on the whole domain $[0,T]\times \R^n$. Nevertheless, we
can get a \emph{layered viscosity solution} for problem $(H,\sigma)$
using Theorem 2.1
\end{rem}

\section{``Layered viscosity solution" based on Hopf-type formulas}

In this section, we pay attention to the case where the initial data $\sigma (x)$ is convex and $H=H(t,p)$ is a continuous function satisfying some additional conditions.
\subsection{Assumptions}

We now  consider the  Cauchy problem for the Hamilton-Jacobi equation:
 \begin{equation}\label{2.1}\frac{\partial u}{\partial t} + H(t, D_x u)=0\, , \,\,
(t,x)\in \Omega =(0,T)\times \R^n,\end{equation}
\begin{equation}\label{2.2}u(0,x)=\sigma(x)\, ,
\,\, x\in \R^n,\end{equation} where the Hamiltonian $H(t,p)$ is of
class $C([0,T]\times\R^n)$,  and $\sigma (x)\in C(\R^n)$ is a convex
function.\vspace*{0.05in}

Let $\sigma^*$ be the Fenchel conjugate of $\sigma.$ By definition,
\begin{equation*}
\sigma^*(x)=\displaystyle\sup_{q\in \R^n} \{ \langle x,q\rangle -\sigma(q)\}.
 \end{equation*}
 We denote by $$D=  \mbox{dom}\, \sigma^*=\{y\in \R^n\ |\, \sigma^*(y)<+\infty\}$$ the effective domain of the convex function $\sigma^*$. The \emph{Hopf-type formula} for the Cauchy problem (3.1)-(3.2) is given by
\begin{equation}\label{2.3}u(t,x) =\max_{q\in \R^n}\, \{\langle x,q\rangle -\, \sigma^*
(q)-\int_0^t H(\tau,q)d\tau\}=(\sigma^*+\int_0^tH(\tau,\cdot)d\tau)^*(x).\end{equation}

To prove that the Hopf-type formula (3.3) is a viscosity solution of
problem (3.1)-(3.2), we use the ``consecutiveness property" (or
``$\mathcal C$- property" for short) of viscosity solution (see [6]): for all $t,s$ such that $0\le s\le t\le T,$
\begin{equation*} \label{1.4}\Big[ \big(\sigma^* +\int_0^sH(\tau, \cdot)d\tau\big)^{**} +\int_s^tH(\tau,\cdot)d\tau\Big]^*=\Big[\sigma^* +\int_0^t H(\tau,\cdot)d\tau\Big]^*.\end{equation*}

This relation is true if condition (A2) (to be formulated later) for $H(t,p)$ holds. It concerns mainly with the assumption that the Hamiltonian $H(\cdot, p)$ does not change its sign on $(0,T).$ If this condition is violated, then the Hopf-type formula (3.3) is no longer a viscosity solution  of problem (3.1)-(3.2); see also \cite{[5],[6]}.
Therefore, we will use Theorem 2.1 to overcome the difficulty. Suppose that $H(\cdot, p)$ may change its sign several times on $(0,T)$. Then on each subinterval $(t_i,t_{i+1})$, where $H(\cdot, p)$ does not change its sign, we use Hopf-type formula to get a viscosity solution $u_i(t,x).$  Then we arrange consecutively these ``partial viscosity solutions" to obtain a layered viscosity solution of the problem on whole interval $(0,T)$. This solution also satisfies the semiconvexity as in the case of the Hope-type formula.\vspace*{0.05in}

Let us recall some results necessary in further. First, we need the following assumptions on the domain $[a, b]\times \R^n$, $a<b$:\\[1ex]
(A1) {For every $(t_0,x_0)\in
[a,b]\times\R^n $,  there exist positive constants $r$ and $N$
such that
$$\langle x,p\rangle -\, \sigma ^* (p)-\int_a^t H(\tau,p)d\tau <
 \max_{|q|\le N}\{\langle x,q\rangle -\, \sigma ^* (q)-\int_a^t H(\tau,q)d\tau\},$$
whenever $ (t,x)\in [a,b]\times\R^n,\, |t-t_0|+|x-x_0|<r$ and
$| p| >N.$}\\[1ex]
(A2) $H(t,p)$ is admitted as one of two following forms:\\[1ex]
a) $H(t,\cdot) $ is a convex (or concave) function for all $t\in [a,b].$\\
b) $H(t,p)=g(t)h(p) +k(t)$ for some functions $g,\ h,\  k$, where $g(t)$ does not change its sign for all $t\in [a,b].$\vspace*{0.05in}

For each $(t,x)\in \Omega,$ let $\ell (t,x)$ be the set of all $p\in
\R^n$ at which the function $$q\mapsto \langle x,q\rangle -\,
\sigma^* (q)-\int_a^t H(\tau,q)d\tau$$ attains a maximum. In virtue
of (A1) on $[a,b]\times \R^n$, one has $\ell(t,x)\ne \emptyset.$\vspace*{0.05in}

Then we have the following theorem.

 \begin{thm} {\rm([6])}. Assume that {\rm (A1)} and {\rm (A2)} are satisfied on $[0,T]\times \R^n$. Moreover, let $H(t,p) \in C^1([0,T]\times \R^n).$ Then the Hopf-type formula
$u(t,x)$ defined by {\rm(\ref{2.3})}
 is a viscosity solution of the Cauchy problem {\rm(3.1)-(3.2)}.
\end{thm}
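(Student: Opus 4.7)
The plan is to verify the three components of the viscosity solution definition. First, the initial condition is immediate from the Hopf-type formula: evaluating at $t=0$ gives $u(0,x)=\sigma^{**}(x)=\sigma(x)$ by the Fenchel--Moreau theorem applied to the continuous convex function $\sigma$. Continuity of $u$ on $[0,T]\times\R^n$ follows from (A1), which localizes the supremum to a compact ball $\{|q|\le N\}$ in a neighborhood of each $(t_0,x_0)$, so joint continuity of $u$ descends from that of the integrand.

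For the subsolution inequality at an interior point $(t_0,x_0)\in\Omega$, I would fix $\phi\in C^1(\Omega)$ with $u-\phi$ attaining a local maximum there and choose $p\in\ell(t_0,x_0)$, which is nonempty by (A1). The affine-in-$x$ function
\[
L(t,x):=\langle x,p\rangle-\sigma^*(p)-\int_0^t H(\tau,p)\,d\tau
\]
satisfies $u\ge L$ globally with equality at $(t_0,x_0)$, so $L-\phi$ inherits a local maximum at $(t_0,x_0)$. Reading off the partial derivatives of $L$ yields $D_x\phi(t_0,x_0)=p$ and $\phi_t(t_0,x_0)=-H(t_0,p)$, hence $\phi_t(t_0,x_0)+H(t_0,D_x\phi(t_0,x_0))=0$, which settles the subsolution inequality (in fact, with equality).

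The supersolution inequality is the harder direction and the place where (A2) is indispensable through the $\mathcal{C}$-property recalled in the excerpt. Assume $u-\phi$ has a local minimum at $(t_0,x_0)$. The $\mathcal{C}$-property supplies the semigroup identity
\[
u(t_0,x_0)=\max_{q\in\R^n}\Big\{\langle x_0,q\rangle-u(s,\cdot)^{*}(q)-\int_{s}^{t_0}H(\tau,q)\,d\tau\Big\}
\]
for every $0\le s\le t_0$. I would take $s=t_0-h$ for small $h>0$, let $q_h$ be a maximizer, and exploit the Fenchel inequality $u(s,y)\ge \langle y,q_h\rangle-u(s,\cdot)^{*}(q_h)$ at the point $y=x_0-h\,D_pH(t_0,q_h)+o(h)$ (the backward characteristic endpoint). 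Combining this lower bound with the local minimum property of $u-\phi$, a Taylor expansion of $\phi$ in $(t,x)$, and the $C^1$ regularity of $H$, and passing to the limit $h\downarrow 0$ along a subsequence --- kept bounded by (A1) --- yields $\phi_t(t_0,x_0)+H(t_0,D_x\phi(t_0,x_0))\ge 0$.

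I expect the main obstacle to be the justification of the $\mathcal{C}$-property under each branch of (A2): in case (a), convexity or concavity of $H(t,\cdot)$ renders $\sigma^*+\int_0^s H(\tau,\cdot)\,d\tau$ already convex (after a possible global sign flip), so the double conjugation collapses to the identity; in case (b), the fixed sign of $g$ on $[0,T]$ is precisely what allows the term $\int_0^s g(\tau)\,d\tau\cdot h(\cdot)$ to be added to $\sigma^*$ without destroying the conjugation identity, while the purely time-dependent $k(t)$ drops out of the Legendre transform. Once the $\mathcal{C}$-property is in hand, the $C^1$ hypothesis on $H$ is used only in the final Taylor-expansion step of the supersolution argument, and (A1) provides the compactness needed to extract convergent subsequences of argmaxima.
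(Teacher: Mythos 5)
The paper offers no proof of this statement at all: Theorem~3.1 is imported verbatim from reference [6], so there is nothing internal to compare you against, and your outline must stand on its own. The easy parts do: $u(0,\cdot)=\sigma^{**}=\sigma$ by Fenchel--Moreau, continuity via the localization in (A1), and the subsolution inequality by touching $u$ from below at $(t_0,x_0)$ with the affine classical solution $L(t,x)=\langle x,p\rangle-\sigma^*(p)-\int_0^tH(\tau,p)\,d\tau$, $p\in\ell(t_0,x_0)$. The supersolution step, which is the entire content of the theorem, does not close as written.

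Concretely: the $\mathcal C$-property gives $u(t_0,x_0)=\langle x_0,q_h\rangle-u(s,\cdot)^*(q_h)-\int_s^{t_0}H(\tau,q_h)\,d\tau$, and the Fenchel \emph{inequality} $u(s,y)\ge\langle y,q_h\rangle-u(s,\cdot)^*(q_h)$ then yields the \emph{upper} bound $u(t_0,x_0)-u(s,y)\le\langle x_0-y,q_h\rangle-\int_s^{t_0}H(\tau,q_h)\,d\tau$. But the local-minimum property of $u-\phi$ also yields an upper bound on the same quantity, $u(t_0,x_0)-u(s,y)\le\phi(t_0,x_0)-\phi(s,y)$; two upper bounds cannot be chained, so no estimate on $\phi_t+H(t_0,D_x\phi)$ follows. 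To repair this you need Fenchel \emph{equality} at your chosen $y$, i.e.\ $y\in\partial u(s,\cdot)^*(q_h)$, which is exactly the first-order optimality condition for the maximizer $q_h$ (optimality of $q_h$ for the concave term $-u(s,\cdot)^*$ plus the smooth term forces $x_0-\int_s^{t_0}H_p(\tau,q_h)\,d\tau\in\partial u(s,\cdot)^*(q_h)$); this is where $C^1$-regularity of $H$ in $p$ really enters, not merely in a ``final Taylor expansion.'' Even with that fix, letting $h\downarrow 0$ with $q_h\to\bar q$ leaves $\phi_t+H(t_0,D_x\phi)\ge H(t_0,D_x\phi)-H(t_0,\bar q)-\langle H_p(t_0,\bar q),D_x\phi-\bar q\rangle$, and the nonnegativity of the right-hand side is the gradient inequality for a \emph{convex} $H(t_0,\cdot)$; in the concave branch and in case (b) of (A2) a further argument is needed, so (A2) is used a second time, beyond the $\mathcal C$-property. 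Finally, your justification of the $\mathcal C$-property in the concave branch of (A2)(a) is incorrect: $\sigma^*+\int_0^sH(\tau,\cdot)\,d\tau$ is a convex function plus a concave one, which no global sign flip turns into a convex function, so the double conjugation does not collapse to the identity there; that case (and case (b)) rest on Lions--Rochet-type commutation identities for inf-convolutions as developed in [5] and [6].
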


As a  direct consequence of  Lemma 2.2, the following proposition is used to define the Hopf-type formula for the initial time $t_0\ne 0.$

\begin{prop} Let $ 0<t_1<T.$
Suppose that $\sigma(x)$ is a convex function on $\R^n$, and
$H(t,p)$ is a continuously differentiable function on $[t_1,T]\times \R^n$ satisfying
conditions {\rm(A1)} and {\rm(A2)} on $[t_1, T]\times \R^n$. Then the function $$u(t,x) =\big(\sigma^* +\int_{t_1}^{t}
H(\tau,\cdot)d\tau\big)^* (x)$$ is a viscosity solution of problem
$(H,\sigma)$ on $(t_1,T)\times \R^n.$
\end{prop}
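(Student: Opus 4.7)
The plan is to reduce the proposition to Theorem 3.1 by means of the time-translation result Lemma 2.2. Set $\tau = t - t_1$ and define the shifted Hamiltonian $H_1(\tau,p) = H(t_1+\tau,p)$ on $[0,T-t_1]\times\R^n$. Since $H \in C^1([t_1,T]\times\R^n)$, clearly $H_1 \in C^1([0,T-t_1]\times\R^n)$. The first step is to check that assumptions (A1) and (A2) transfer from $H$ on $[t_1,T]\times\R^n$ to $H_1$ on $[0,T-t_1]\times\R^n$; this is immediate because after the change of variable $s = t_1+s'$ we have
\[
\int_0^\tau H_1(s',p)\,ds' = \int_{t_1}^{t_1+\tau} H(s,p)\,ds = \int_{t_1}^{t} H(s,p)\,ds,
\]
so the inequality in (A1), with the lower limit $a$ replaced by $0$ and the interval by $[0,T-t_1]$, is nothing but the original (A1) for $H$ rewritten in the shifted variable. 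The decomposition in (A2)(b) (respectively the convexity/concavity in (A2)(a)) is preserved under the simple relabeling $t \mapsto t_1+\tau$.

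Next, I apply Theorem 3.1 on the cylinder $(0,T-t_1)\times\R^n$ to the Cauchy problem
\[
w_\tau + H_1(\tau,D_x w) = 0,\qquad w(0,x) = \sigma(x).
\]
Theorem 3.1 asserts that the Hopf-type formula
\[
w(\tau,x) = \Bigl(\sigma^* + \int_0^\tau H_1(s,\cdot)\,ds\Bigr)^{\!*}(x)
\]
is a viscosity solution of this Cauchy problem on $(0,T-t_1)\times\R^n$. Using the change of variable identity for the integral recorded above, I rewrite
\[
w(\tau,x) = \Bigl(\sigma^* + \int_{t_1}^{t_1+\tau} H(s,\cdot)\,ds\Bigr)^{\!*}(x) = u(t_1+\tau,x),
\]
where $u$ is the function appearing in the statement of the proposition.

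Finally, Lemma 2.2 (applied to $u$, $\sigma$, and $H$ on $[t_1,T]\times\R^n$) says precisely that $u(t,x)$ is a viscosity solution of the Cauchy problem $(H,\sigma)$ on $(t_1,T)\times\R^n$ if and only if $u_1(\tau,x) := u(t_1+\tau,x)$ is a viscosity solution of $(H_1,\sigma)$ on $(0,T-t_1)\times\R^n$. Since $u_1 = w$ and $w$ was just shown to be a viscosity solution by Theorem 3.1, the proposition follows.

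I do not expect any real obstacle: all the work was already done in Theorem 3.1 and Lemma 2.2, and the only thing that requires a line of verification is that the hypotheses (A1)--(A2), stated on $[t_1,T]$, pull back verbatim to hypotheses for $H_1$ on $[0,T-t_1]$ via the time shift. The proposition is essentially a bookkeeping corollary that makes the Hopf-type formula available with an initial time $t_0 = t_1 \neq 0$, which is precisely what will be needed in the next subsection to glue partial solutions via Theorem 2.1.
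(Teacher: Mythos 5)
Your proposal is correct and follows exactly the route the paper intends: the paper presents Proposition 3.2 as ``a direct consequence of Lemma 2.2,'' i.e., time-translate to $[0,T-t_1]$, observe that (A1)--(A2) and the $C^1$ hypothesis pull back verbatim under the shift, apply Theorem 3.1 to the shifted problem, and transfer back via Lemma 2.2. Your write-up simply makes explicit the bookkeeping the paper leaves implicit.
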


The following theorem defines a viscosity solution of the Cauchy problem for Hamilton-Jacobi equation $(H,\sigma)$ where condition (A2) holds on each subinterval of the $(0,T).$
\begin{thm}
Let $\sigma(x)$  be a convex function on $\R^n$ and $H(t,p)\in C^1([0,T]\times \R^n).$ Suppose that there exists $t_1\in (0,T)$ such that on each subdomain $[0,t_1]\times \R^n$ and $[t_1,T]\times \R^n$, conditions {\rm(A1)} and {\rm(A2)} are satisfied, where $\sigma(\cdot)$ is replaced by $u(t_1,\cdot)$ for condition {\rm (A1)} on $[t_1, T]\times \R^n$. Then the function $u(t,x)$ given by

\begin{equation*}\label{II} u(t,x)=\simuleq{&\big (\sigma^* +\int_0^t H(\tau,\cdot)d\tau\big)^* (x), \quad  (t,x)\in [0,t_1]\times \R^n, \\
&\big (u^*(t_1,\cdot) +\int_{t_1}^t H(\tau,\cdot)d\tau\big)^*(x),\quad (t,x)\in [t_1,T]\times \R^n}\end{equation*}
is a viscosity solution of the Cauchy problem $(H,\sigma)$ on $(0,T)\times\R^n.$
\end{thm}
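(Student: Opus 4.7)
The plan is to realize the piecewise function $u(t,x)$ as a \emph{layered viscosity solution} obtained by gluing two Hopf-type partial solutions via Theorem~2.1. Concretely, I would produce a solution on the first slab by Theorem~3.1, use the trace of that solution at $t=t_1$ as initial data on the second slab, produce a second solution via Proposition~3.2, and finally invoke Theorem~2.1 to paste the two together into a single viscosity solution on $(0,T)\times\R^n$.

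First I would set
\[
u_1(t,x) := \Bigl(\sigma^{*} + \int_0^t H(\tau,\cdot)\,d\tau\Bigr)^{*}(x), \qquad (t,x)\in [0,t_1]\times\R^n.
\]
Since $\sigma$ is convex, $H\in C^1([0,T]\times\R^n)$, and (A1)--(A2) hold on $[0,t_1]\times\R^n$, Theorem~3.1 gives that $u_1$ is a viscosity solution of $(H,\sigma)$ on $\Omega_1=(0,t_1)\times\R^n$ with $u_1(0,\cdot)=\sigma$. The key structural remark is that $u_1(t_1,\cdot)$, being a Fenchel conjugate, is automatically convex and lower semicontinuous on $\R^n$, which is precisely what allows the Hopf-type machinery to be restarted at time $t_1$ with $u_1(t_1,\cdot)$ in the role of the initial datum.

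Next, I set $\sigma_2(x):=u_1(t_1,x)$; the hypothesis provides that (A1) and (A2) hold on $[t_1,T]\times\R^n$ with $\sigma$ replaced by $\sigma_2$. Lemma~2.2 (time-translation invariance) combined with Proposition~3.2 then yields that
\[
u_2(t,x) := \Bigl(u_1(t_1,\cdot)^{*} + \int_{t_1}^t H(\tau,\cdot)\,d\tau\Bigr)^{*}(x)
\]
is a viscosity solution of $u_t+H(t,D_xu)=0$ on $(t_1,T)\times\R^n$, with $u_2(t_1,\cdot)=u_1(t_1,\cdot)^{**}$. Because $u_1(t_1,\cdot)$ is convex and lower semicontinuous, the Fenchel--Moreau theorem gives $u_1(t_1,\cdot)^{**}=u_1(t_1,\cdot)$, so $u_2(t_1,x)=u_1(t_1,x)$ for every $x\in\R^n$; the two partial solutions therefore agree across the interface $\{t=t_1\}$.

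Finally, Theorem~2.1 applied to the pair $(u_1,u_2)$, with $\sigma_1:=\sigma$ and $\sigma_2:=u_1(t_1,\cdot)$, concludes that the piecewise function displayed in the statement is a viscosity solution of $(H,\sigma)$ on $(0,T)\times\R^n$. The step I expect to demand the most care is the identification $u_1(t_1,\cdot)^{**}=u_1(t_1,\cdot)$ that makes the two slabs compatible and continuous at $t=t_1$; once this Fenchel--Moreau identity is in place, everything else is bookkeeping through Theorem~3.1, Proposition~3.2, and the gluing Theorem~2.1.
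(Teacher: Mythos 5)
Your proposal is correct and follows essentially the same route as the paper: the paper's own (very terse) proof consists precisely of observing that $\omega(\cdot)=u(t_1,\cdot)$ is convex and then citing Theorem~3.1, Proposition~3.2, and Theorem~2.1 in that order. Your added detail that $u(t_1,\cdot)^{**}=u(t_1,\cdot)$ by Fenchel--Moreau, which guarantees the compatibility condition $\sigma_2=u_1(t_1,\cdot)$ required by Theorem~2.1 and the continuity of the glued function across $t=t_1$, is exactly the point the paper compresses into the remark that $\omega$ is convex.
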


\begin{proof} We note first that, for fixed $t_1\in (0,T)$, the function $\omega(\cdot) =u(t_1,\cdot)$ is convex. By Theorem 3.1 and Proposition 3.2, and
 then applying Theorem 2.1, we deduce the conclusion of the theorem.
\end{proof}

\begin{rem}
From Theorem 3.3, we have several corollaries. For example, we can suppose that the interval $[0,T]$ can be split into  $m$ subintervals $[t_i, t_{i+1}]$ on which\\[1ex]
{\rm (i)} $H(t,\cdot)$ is convex (or concave), or\\
{\rm (ii)} If $H(t,x)=g(t)h(p)+k(t)$ then $g(t)$ does not change its sign on each subinterval.\vspace*{0.05in}

Of course, on $[0,T]$, the Hamiltonian $H(t,p)$ can be mixed by forms (i) or (ii) from one subinterval to another. Then a viscosity solution $u(t,x)$ on $[0,T]\times\R^n$ can be defined as $m-$``layered viscosity solution" via the solutions $u_i(t,x)$ of the problem $(H,\sigma_i)$ on $(t_i,t_{i+1})\times \R^n$ given by Hopf-type formula, where $\sigma_i(x) =u_i(t_i,x),\  \sigma_0(x)=\sigma(x)$ are convex functions.
\end{rem}
\subsection{The semiconvexity of layered viscosity solutions}

\begin{defn}
A function $v:\ \Omega \to \R$ is called {\it semiconvex} with linear modulus if there is a constant $C>0$ such that
$$v(\lambda y_1+(1-\lambda)y_2)-\lambda v(y_1)-(1-\lambda ) v(y_2)\le \lambda (1-\lambda) \frac C2 |y_1-y_2|^2$$
for any $y_1=(t_1,x_1)$ and $y_2=(t_2,x_2)$ in $\Omega$ and for any
$\lambda \in [0,1].$
\end{defn}
The function $v$ is semiconcave if and only if $-v$ is semiconvex.
In this paper, we therefore use some properties of semiconvex
functions transferred from those of semiconcave functions given in
[2]. The theory of semiconcave functions has been developed since
the last decades of previous century. The reader is referred to the
aforementioned monograph [2] for a systematic development of the
theory.\vspace*{0.05in}

In our recent paper [6], we prove that the viscosity solution of the
Cauchy problem $(H,\sigma)$ given by Hopf type formula (3.3) is  a
semiconvex function on the domain $\Omega$. For the ``layered
viscosity solution", we also get the same property as in the theorem
below. In this theorem condition (A1) is automatically satisfied
since $\sigma$ is convex and Lipschitz on $\R^n$, which implies the
domain of $\sigma^*$ is bounded; see [9].

 \begin{thm} Suppose that $H(t,p) \in C^1([0,T]\times \R^n)$ and $\sigma$ is convex and Lipschitz on $\R^n$. Let $t_+\in (0,T)$ be a positive real number such that on each subdomain $[0,t_+]\times \R^n$ and $[t_+,T]\times \R^n$, the Hamiltonian $H(t,p)$ satisfies condition {\rm (A2)}. Then the layered viscosity solution $u(t,x)$ defined by 
\begin{equation}\label{new} u(t,x)=\simuleq{&\big (\sigma^* +\int_0^t H(\tau,\cdot)d\tau\big)^* (x), \quad  (t,x)\in [0,t_+]\times \R^n, \\
&\big(u^*(t_+,\cdot) +\int_{t_+}^t
H(\tau,\cdot)d\tau\big)^*(x),\quad (t,x)\in [t_+,T]\times
\R^n}\end{equation} 
is a semiconvex function on $\Omega.$\end{thm}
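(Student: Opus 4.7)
The plan is to reduce the semiconvexity of $u$ on $\Omega$ to the convexity of a single perturbed function $\tilde u(y) := u(y) + \frac{C}{2}|y|^2$, where $y = (t,x)$ and $C$ is a common constant for both slabs. Since $\sigma$ is convex and Lipschitz with some constant $L$, the domain $D_1 := \mathrm{dom}(\sigma^*)$ is contained in $\overline{B(0,L)}$; the Hopf formula on the first slab then makes $u(t_+,\cdot)$ Lipschitz with constant at most $L$, so $D_2 := \mathrm{dom}(u^*(t_+,\cdot))$ also lies in $\overline{B(0,L)}$. I would then take $C := \sup\{|H_t(\tau,q)| : \tau \in [0,T],\, |q| \le L\}$, which is finite by the $C^1$ regularity of $H$.

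On each slab I would reproduce the argument of \cite{[6]} with this uniform constant: for every $q \in \overline{B(0,L)}$ the map $F_q(t,x) := \langle x,q\rangle - \sigma^*(q) - \int_0^t H(\tau,q)\,d\tau$ (and the analogous $G_q$ with initial time $t_+$ used to define $u_2$) is affine in $x$ and satisfies $\partial_t^2 F_q = -H_t(t,q) \ge -C$, so its full Hessian in $(t,x)$ is bounded below by $-CI$ and $F_q + \frac{C}{2}|y|^2$ is convex. A pointwise supremum of convex functions is convex, so $\tilde u$ is convex on $\Omega_1$ and on $\Omega_2$ separately. The semiconvexity inequality thus holds for any pair $y_1,y_2$ lying in the same slab, and what remains is the case $y_1=(t_1,x_1)\in\Omega_1$, $y_2=(t_2,x_2)\in\Omega_2$. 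The segment crosses the interface $\{t_+\}\times\R^n$ at the unique parameter $\lambda_* := (t_+-t_1)/(t_2-t_1) \in (0,1)$ and point $y_0 := (t_+,x_0)$; the one-variable function $g(\lambda) := \tilde u((1-\lambda)y_1+\lambda y_2)$ is already convex on $[0,\lambda_*]$ and on $[\lambda_*,1]$, so convexity on $[0,1]$ is equivalent to $g'(\lambda_*^-) \le g'(\lambda_*^+)$.

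Writing $v = y_2-y_1 = (v_t,v_x)$ with $v_t > 0$, Danskin's theorem applied to the two Hopf suprema yields
\[
g'(\lambda_*^-) = \min_{q \in \ell_1(y_0)} \bigl[\langle v_x,q\rangle - v_t H(t_+,q)\bigr] + C\langle y_0,v\rangle,
\]
\[
g'(\lambda_*^+) = \max_{q \in \ell_2(y_0)} \bigl[\langle v_x,q\rangle - v_t H(t_+,q)\bigr] + C\langle y_0,v\rangle,
\]
where $\ell_i(y_0)$ denotes the (compact, non-empty) set of maximizers in the Hopf supremum defining $u_i$ at $y_0$. The desired inequality follows at once from the inclusion $\ell_1(y_0) \subseteq \ell_2(y_0)$, since any $q^* \in \ell_1(y_0)$ then wedges the two derivatives.

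Establishing this inclusion is the step I expect to be the main obstacle. Setting $\bar\sigma(q) := \sigma^*(q) + \int_0^{t_+} H(\tau,q)\,d\tau$, the Hopf formula gives $u(t_+,\cdot) = \bar\sigma^*$, so $u^*(t_+,\cdot) = \bar\sigma^{**} \le \bar\sigma$ pointwise. If $q \in \ell_1(y_0)$ then the Fenchel--Young equality $\bar\sigma(q) + \bar\sigma^*(x_0) = \langle x_0,q\rangle$ holds; combining $\bar\sigma^{**}(q) \le \bar\sigma(q)$ with the reverse Fenchel--Young bound $\bar\sigma^{**}(q) + \bar\sigma^*(x_0) \ge \langle x_0,q\rangle$ forces equality, giving $q \in \ell_2(y_0)$. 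Non-emptiness of $\ell_1(y_0)$ is automatic because $\mathrm{dom}(\sigma^*)$ is bounded, and the analogous statement for $\ell_2(y_0)$ follows from the Lipschitz bound on $u(t_+,\cdot)$. Assembling these pieces makes $\tilde u$ convex on $\Omega$, so $u$ is semiconvex on $\Omega$ with modulus $C$.
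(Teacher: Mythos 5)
Your argument is correct, but it takes a genuinely different route from the paper's. The paper proves the semiconvexity inequality directly by a three-case analysis on the positions of $t_1,t_2,t_0$ relative to $t_+$: within a single slab it invokes the estimate from [6]; when $t_0$ and $t_2$ straddle $t_+$ it picks $p\in\ell(t_0,x_0)$, uses $u^*(t_+,\cdot)-\sigma^*(\cdot)\le\int_0^{t_+}H(\tau,\cdot)d\tau$ (i.e.\ $v^{**}\le v$) pointwise inside a chain of estimates, and finishes with the mean value theorem; in the remaining case it splits the segment at the interface and concatenates the first two cases, which is where the final constant degrades to $2M$. You instead reduce everything to convexity of $u+\frac{C}{2}|y|^2$, obtain it on each slab as a supremum of convex functions, and handle the interface by a one-variable derivative-matching criterion whose key input is the inclusion $\ell_1(t_+,x_0)\subseteq\ell_2(t_+,x_0)$ --- a clean Fenchel--Young consequence of $\bar\sigma^{**}\le\bar\sigma$, which is the same inequality the paper uses but deployed structurally (to transport maximizers across the interface) rather than numerically. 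Your route yields the sharper modulus $C=M$ in place of $2M$ and makes transparent why no convexity jump occurs at $t=t_+$. Two small caveats, neither a gap: the exact Danskin identities $g'(\lambda_*^-)=\min_{\ell_1}(\cdot)$ and $g'(\lambda_*^+)=\max_{\ell_2}(\cdot)$ require an upper-semicontinuity-of-the-argmax argument because of the possibly discontinuous terms $\sigma^*$ and $u^*(t_+,\cdot)$, but you only need the elementary one-sided bounds $g'(\lambda_*^-)\le\langle v_x,q^*\rangle-v_tH(t_+,q^*)+C\langle y_0,v\rangle\le g'(\lambda_*^+)$ for a single $q^*\in\ell_1\subseteq\ell_2$, which follow by comparing $g$ with the smooth minorant $\lambda\mapsto F_{q^*}$ (resp.\ $G_{q^*}$) touching it at $\lambda_*$; and, like the paper's proof, yours never actually uses condition (A2), which is present in the hypotheses only so that $u$ is the layered viscosity solution.
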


\begin{proof}  For $(t_1,x_1)$ and $(t_2,x_2)$ in $\Omega$, where $t_1
\leq t_2$, and $\lambda \in [0,1]$, put $(t_0,x_0)=\lambda
(t_1,x_1)+(1-\lambda)(t_2,x_2).$\vspace*{0.05in}

Consider the following cases:\vspace*{0.05in}

\noindent {\bf Case 1.} $0\le t_1\le t_2\le t_+$ or $t_+\le t_1\le t_2\le T.$
\smallskip

Let $D=\mbox{dom }\sigma^*$. Arguing as in the proof of Theorem 2.4 in [6], we obtain the inequality below
\begin{equation*}
u(t_0,x_0)\le \lambda u(t_1,x_1)+(1-\lambda)u(t_2,x_2)
+\lambda(1-\lambda) M|t_2-t_1|^2,\end{equation*}
where $M=\sup_{\tau\in [0,T], \ p\in D}|H_t(\tau,p)|$ by applying the
mean value theorem. Note that $D$ is bounded since $\sigma(x)$ is
Lipschitz on $\R^n$.\vspace*{0.05in}

\noindent {\bf Case 2.} $0\le t_1\le t_0\le t_+\leq t_2\le T.$
\smallskip

Take $p\in \ell(t_0,x_0)$ and let
$$\Phi \ =\ u(t_0,x_0)-\lambda u(t_1,x_1)-(1-\lambda )u(t_2,x_2).$$
Observe that
$$(u^*(t_+,\cdot) -\sigma^*(\cdot)) (p) =\big[(\sigma^*+\int_0^{t_+} H(\tau,\cdot)d\tau)^{**} (\cdot) -\sigma^*(\cdot)\big] (p)\le \int_0^{t_+} H(\tau,p)d\tau$$
since $v^{**}\le v$ for any function $v$.

It follows that
$$\aligned \Phi \ \le  &\  \langle \lambda x_1+(1-\lambda )x_2,p\rangle -\sigma^*(p)-\int_0^{t_0}H(\tau,p)d\tau\\
-& \  \lambda\langle x_1,p\rangle +\lambda \sigma^*(p)+\lambda \int_0^{t_1}H(\tau,p)d\tau\\
 - &\ (1-\lambda)\langle x_2,p\rangle +(1-\lambda) u^*(t_+, p)+(1-\lambda)\int_{t_+}^{t_2}H(\tau,p)d\tau\\
\le &\  \lambda \Big( \int_{t_0}^{t_1}H(\tau,p)d\tau \Big) +(1-\lambda) \Big( \int_{t_+}^{t_2}H(\tau,p)d\tau -\int_0^{t_0} H(\tau,p)d\tau\Big) \\
+ &\ (1-\lambda) (u^*(t_+,\cdot) -\sigma^*(\cdot)) (p)\\
\le & \  \lambda \Big( \int_{t_0}^{t_1}H(\tau,p)d\tau \Big) +(1-\lambda) \Big( \int_{t_0}^{t_2}H(\tau,p)d\tau \Big).\endaligned$$
Therefore, we have for some $\tau^*_1\in [t_1,t_0],\  \tau^*_2 \in [t_0,t_2],$
$$\aligned \Phi \ \le &\ \lambda (t_1-t_0)H(\tau^*_1,p) +(1-\lambda) (t_2-t_0)H(\tau^*_2,p)\\
\le &\ \lambda (t_1-(\lambda t_1+(1-\lambda)t_2)) H(\tau^*_1,p) +(1-\lambda) (t_2-(\lambda t_1+(1-\lambda)t_2)) H(\tau^*_2,p)\\
\le&\ \lambda (1-\lambda)(t_1-t_2) \big (H(\tau^*_1,p)-H(\tau^*_2,p)\big)\\
\le&\  \lambda (1-\lambda)|t_1-t_2| \big |H(\tau^*_1,p)-H(\tau^*_2,p)\big|\\
\le&\ \lambda (1-\lambda)\, M\, |t_1-t_2|^2,\endaligned$$
where $M$ is defined in Case 1.

Consequently,
$$u(t_0,x_0)\le \lambda u(t_1,x_1)+(1-\lambda )u(t_2,x_2) +\lambda (1-\lambda) M|t_1-t_2|^2.$$

\noindent {\bf Case 3.} $0\le t_1\leq t_+\le t_0\le t_2\le T.$
\medskip

We can assume that $t_+<t_0$ because the other possibility has been considered in Case 2. Since $t_0=\lambda t_1+(1-\lambda)t_2,$ one has $\lambda =\frac{t_2-t_0}{t_2-t_1}.$ The line segment joining $(t_1,x_1)$ and $(t_2,x_2)$ intersects the plane $t=t_+$ at $(t_+,x_+).$ Let $t_0 =\mu t_++(1-\mu) t_2$ and $t_+ =\alpha t_1 +(1-\alpha)t_0$. Then
\begin{equation}\label {nN}\mu =\frac{t_2-t_0}{t_2-t_+} \  \ \text{and}\  \ \alpha =\frac{t_0-t_+}{t_0-t_1}.\end{equation}
A simple calculation shows that
$$\lambda =\frac{\alpha \mu}{1-\mu +\alpha \mu}\quad  \mbox{ and }\quad  1-\lambda =\frac{1- \mu}{1-\mu +\alpha \mu}.$$

Applying Cases 1 and 2 in the sequel, we have
$$u(t_0,x_0)\le \mu u(t_+,x_+)+(1-\mu )u(t_2,x_2) +\mu(1-\mu)M |t_2-t_+|^2$$
and
$$u(t_+,x_+)\le \alpha u(t_1,x_1)+(1-\alpha )u(t_0,x_0) +\alpha(1-\alpha)M |t_0-t_1|^2.$$
Therefore,
$$\aligned u(t_0,x_0)\le \alpha\mu u(t_1,x_1) +&\mu(1-\alpha)u(t_0,x_0) +\mu\alpha(1-\alpha) M|t_0-t_1|^2\\
+& (1-\mu )u(t_2,x_2) +\mu(1-\mu)M |t_2-t_+|^2.\endaligned$$
Thus,
$$\aligned u(t_0,x_0)\ \le &\ \frac{\alpha\mu}{1-\mu+\alpha \mu}u(t_1,x_1) +\frac{1-\mu}{1-\mu+\alpha \mu}u(t_2,x_2) \\
+&\  \frac{\alpha\mu (1-\alpha)}{1-\mu+\alpha \mu}M |t_0-t_1|^2 +\frac{\mu(1-\mu)}{1-\mu+\alpha \mu}M |t_2-t_+|^2\\
\le &\ \lambda u(t_1,x_1) +(1-\lambda)u(t_2,x_2) +M\big ( \lambda(1-\alpha) |t_0-t_1|^2 +\mu(1-\lambda)|t_2-t_+|^2\big).\endaligned$$

Let $\Delta =  \lambda(1-\alpha) |t_0-t_1|^2 +\mu(1-\lambda)|t_2-t_+|^2$. Using (\ref{nN}) and the fact that $\lambda=\frac{t_2-t_0}{t_2-t_1}$, we have
$$\aligned \Delta = &\ \lambda(1-\alpha) (1-\lambda)^2|t_2-t_1|^2 +\frac{1-\lambda}{\mu}|t_2-t_0|^2\\
=&\ \lambda(1-\lambda)^2(1-\alpha)|t_2-t_1|^2 +(1-\lambda)\lambda \frac{\lambda}{\mu}|t_2-t_1|^2\\
=&\  \lambda(1-\lambda) |t_2-t_1|^2\big( (1-\alpha)(1-\lambda) +\frac{\lambda}{\mu}\big )\le 2\lambda(1-\lambda) |t_2-t_1|^2\endaligned$$
since $\displaystyle\ {\frac{\lambda}{\mu} =\frac{t_2-t_+}{t_2-t_1} \in [0,1] }.$
\smallskip

Finally, we get
$$u(t_0,x_0)\le \lambda u(t_1,x_1)+(1-\lambda )u(t_2,x_2) +\lambda (1-\lambda) 2M|t_1-t_2|^2.$$
It follows from the definition that the function $u(t,x)$ is semiconvex in $\Omega.$ The theorem has been proved.
\end{proof}
\begin{rem}
1. Under the assumptions of Theorem 3.6, the viscosity solution $u(t,x)$ given by (\ref{new}) is a semiconvex function. Thus, it is also a locally Lipschitz function on $\Omega.$ For $t\in [t_+, T]$, we can rewrite $u(t,x)$ as follows:
$$u(t,x) =\Big( (\sigma^*+\int_0^{t_+}H(\tau,\cdot)d\tau)^{**} +\int_{t_+}^tH(\tau,\cdot)d\tau\Big)^* (x).$$

On the other hand, the Hopf-type formula for the Cauchy problem (3.1)-(3.2) is of the following form:
$$u_H(t,x)=\big(\sigma^* +\int_0^tH(\tau,\cdot)d\tau\big)^* (x)$$
The function $u_H(t,x)$ is defined on $[0,T]\times \R^n$, and it is also a locally Lipschitz semiconvex function; see [6, 10]. Note that
$$\big(\sigma^*+\int_0^{t_+}H(\tau,\cdot)d\tau\big)^{**}\le \sigma^* +\int_0^{t_+} H(\tau,\cdot)d\tau.$$
Therefore, $u_H(t,x)\le u(t,x)$ for all $(t,x)\in [t_+,T]\times\R^n.$

If the $``\mathcal C$-property" for $H(t,p)$ and $\sigma$ holds on $[0,T]$, then $u(t,x) =u_H(t,x)$ on $\Omega.$ Otherwise, for some $(t,x)\in \Omega$, where $t>t_+$, one has $u(t,x)>u_H(t,x).$\vspace*{0.05in}

2. It is worth noticing that both functions $u(t,x)$ and $u_H(t,x)$ are Lipschitz solutions of the Cauchy problem (3.1)-(3.2), (i.e., the solutions are locally Lipschitz and satisfy the equation at almost all points of the domain) and moreover, they are semiconvex functions. Therefore, the criteria ``semiconvexity" or ``semiconcavity" are not strong enough to get the uniqueness of Lipschitz solution of the Cauchy problem for Hamilton - Jacobi equation if the Hamiltonian $H(t,x,p)$ is not concave or convex in the variable $p$; see [4, p.132].
\end{rem}
\section{Relationship between viscosity solutions and characteristics. Singularity}

\subsection{The Hopf-type formula and characteristics}

Next, we are concerned with the function $\omega(x) =u(t_1, x)$ as the initial data for problem $(H,\omega)$ on the interval $[t_1, T].$ We will investigate the relationship of characteristic curves between two problems $(H,\sigma)$ and $(H,\omega).$ To this aim, let us recall the Cauchy method of characteristics for problem (3.1)-(3.2). Note that, to use the method of characteristics, at least the initial data is assumed to be of class $C^1$. Therefore, some  sufficient conditions for $u(t,x)$ to be of class $C^1$ in $[0,t_1]\times\R^n$ will also be presented here. \vspace*{0.05in}

From now on, we will use  the standing assumptions that $H(t,p)\in C^1([0,T]\times\R^n)$ and $\sigma\in C^1(\R^n).$  Moreover, $\sigma$ is convex and Lipschitz on $\R^n.$ \vspace*{0.05in}

The characteristic differential equations of problem
(3.1)-(3.2) are given as follows:
\begin{equation}\label{2.5}
\dot x=H_p \ ;\qquad \dot v = \ \langle H_p,p\rangle
 - \ H \ ;\qquad \dot p=0 \,\end{equation}
with the initial conditions:
\begin{equation}\label{2.6} x(0)=y \ ;\qquad v(0)=\sigma(y)\ ;\qquad p(0)=
\sigma _y(y)\ ,\quad y\in \R^n.\end{equation}

Then a characteristic strip of problem
(\ref{2.1})-(\ref{2.2}) (i.e., a solution of the system of
differential equations (\ref{2.5})-(\ref{2.6})) is defined by
\begin{equation}\label{2.7}\simuleq{x&=x(t,y)=y+\int_0^t H_p(\tau,\sigma_y(y))d\tau, \\
v&=v(t,y)=\sigma(y)+\int_0^t
\langle H_p(\tau,\sigma_y(y)),\sigma_y(y)\rangle d\tau-\int_0^tH(\tau,\sigma_y(y))d\tau,\\
 p&= p(t,y)\ =\ \sigma_y(y).}\end{equation}

The first component of solutions (\ref{2.7}) is called the characteristic curve (briefly, characteristics) emanating from $y$ i.e., the curve defined by
\begin{equation*}\label{2.8}\mathcal C:\ x=x(t,y)=y+\int_0^t H_p(\tau,\sigma_y(y))d\tau,\ t\in [0,T]. \end{equation*}
\medskip

Let $(t_0,x_0) \in \Omega. $ Denoted by $\ell^*(t_0,x_0)$ the set of all $y\in \R^n$ such that there is a
characteristic curve emanating from $y$ and passing the
point $(t_0,x_0).$ We have $\ell (t_0,x_0) \subset {\sigma}_y(\ell^*(t_0,x_0))$; see [8]. Therefore, $\ell^*(t_0,x_0) \ne \emptyset.$ Moreover,

\begin{prop}{\rm ([7])} Let $(t_0,x_0)\in \Omega.$ Then a characteristic curve $(\mathcal C)$ passing $(t_0,x_0)$ has form
\begin{equation*}\label {2.9} x=x(t,y)=x_0+\int_{t_0}^t H_p(\tau,\sigma_y(y))d\tau\end{equation*}
for some $y\in \ell^*(t_0,x_0).$
\end{prop}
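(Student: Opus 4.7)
The plan is to unpack the definition of $\ell^*(t_0,x_0)$ and then perform a simple splitting of the integral defining the characteristic curve; the proposition is essentially a reparametrization statement rather than a new computation.

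By the definition recalled just before the statement, $y \in \ell^*(t_0,x_0)$ means exactly that the characteristic emanating from $y$, namely
$$x(t,y) = y + \int_0^t H_p(\tau, \sigma_y(y))\, d\tau, \qquad t \in [0,T],$$
passes through the point $(t_0, x_0)$. Evaluating at $t = t_0$, I would observe that this is equivalent to the identity
$$x_0 = y + \int_0^{t_0} H_p(\tau, \sigma_y(y))\, d\tau.$$

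The next step is to split the integral defining $x(t,y)$ at $t_0$, writing
$$x(t,y) = \Bigl( y + \int_0^{t_0} H_p(\tau, \sigma_y(y))\, d\tau \Bigr) + \int_{t_0}^{t} H_p(\tau, \sigma_y(y))\, d\tau,$$
and then using the identity above to replace the bracketed quantity by $x_0$. This gives
$$x(t,y) = x_0 + \int_{t_0}^{t} H_p(\tau, \sigma_y(y))\, d\tau,$$
which is exactly the stated form.

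There is no genuine obstacle here: the content of the proposition is merely that any characteristic passing through $(t_0,x_0)$ admits a parametrization taking $(t_0,x_0)$ as base point in place of the usual initial time $t = 0$, and this reduces to additivity of the integral. The one preliminary fact required is the nonemptiness of $\ell^*(t_0,x_0)$; this was already recorded just above via the inclusion $\ell(t_0,x_0) \subset \sigma_y(\ell^*(t_0,x_0))$ together with assumption (A1), which ensures $\ell(t_0,x_0) \neq \emptyset$.
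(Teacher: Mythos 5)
Your argument is correct: the paper itself gives no proof of this proposition (it is quoted from reference [7]), and your derivation --- unpacking the definition of $\ell^*(t_0,x_0)$, evaluating the characteristic at $t=t_0$ to get $x_0=y+\int_0^{t_0}H_p(\tau,\sigma_y(y))\,d\tau$, and splitting the integral --- is exactly the intended reparametrization argument. Nothing further is needed.
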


We say that the characteristic curve $\mathcal C$ is of {\it type (I) } at the point $(t_0,x_0) \in \Omega$ if $\sigma_y(y) =p\in \ell(t_0,x_0).$ If $\sigma_y(y)\in \sigma_y(\ell^*(t_0,x_0))\setminus \ell(t_0,x_0)$, then $\mathcal C$ is said to be of {\it type (II) } at $(t_0,x_0).$ \vspace*{0.05in}

For $t_1\in (0,T),$ we reproduce in the theorem below a result in [7] stating that the viscosity solution $u(t,x)$ of problem $(H,\sigma)$  is of class $C^1([0,t_1]\times\R^n)$.
\begin{thm}
Let $H(t,p)$  satisfy condition {\rm(A2)} on $[0,t_1]\times\R^n$  and let $\sigma\in C^1(\R^n)$ be convex and Lipschitz.  Suppose that the function $$\omega(x)=u(t_1,x)=(\sigma^* +\int_{0}^{t_1} H(\tau,\cdot)d\tau)^*(x)$$ is of $C^1(\R^n).$ Then the solution  $$u(t,x)=(\sigma^*  +\int_{0}^{t} H(\tau,\cdot)d\tau)^*(x)$$ is of  class $C^1([0,t_1]\times \R^n).$
\end{thm}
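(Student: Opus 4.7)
My plan is a three-stage argument: (i) reduce $C^1$ regularity of $u$ to single-valuedness of the maximizer set $\ell(t,x)$; (ii) propagate that single-valuedness backward in time from the hypothesis on $\omega$; (iii) deduce continuity of the partial derivatives via compactness.

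\textbf{Reduction.} For each fixed $t$, the function $u(t,\cdot)$ is convex on $\R^n$ as the Fenchel conjugate of $q\mapsto\sigma^*(q)+\int_0^t H(\tau,q)\,d\tau$. One always has $\ell(t,x)\subseteq\partial_x u(t,x)$, and for a convex function differentiability at $x$ is equivalent to single-valuedness of the subdifferential. So once $\ell(t,x)=\{p(t,x)\}$ is a singleton at every $(t,x)\in[0,t_1]\times\R^n$, one gets $D_x u(t,x)=p(t,x)$, while a standard envelope-theorem computation inside the Hopf-type formula yields $u_t(t,x)=-H(t,p(t,x))$.

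\textbf{Backward propagation.} Fix $(t_0,x_0)\in[0,t_1)\times\R^n$ and suppose, for contradiction, that $p\ne q$ both lie in $\ell(t_0,x_0)$. Since the argmax of a concave function in $q$ is convex, the segment $p_\lambda=\lambda p+(1-\lambda)q$, $\lambda\in[0,1]$, sits entirely in $\ell(t_0,x_0)$. For each $\lambda$ let $x_1^\lambda=x_0+\int_{t_0}^{t_1}H_p(\tau,p_\lambda)\,d\tau$ be the time-$t_1$ terminus of the forward characteristic carrying momentum $p_\lambda$. Using assumption (A2) -- convexity/concavity of $H(\tau,\cdot)$ in case (a), or the product structure $g(t)h(p)+k(t)$ with $g$ of constant sign in case (b) -- together with the defining identity of $\ell(t_0,x_0)$, a direct mean-value calculation shows $p_\lambda\in\ell(t_1,x_1^\lambda)$ for every $\lambda$. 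The hypothesis $\omega\in C^1$ then forces $p_\lambda=D\omega(x_1^\lambda)$. Combining this with the monotonicity of $D\omega$ (as gradient of a convex function) and the representation $x_1^p-x_1^q=\int_{t_0}^{t_1}[H_p(\tau,p)-H_p(\tau,q)]\,d\tau$ produces a sign constraint that is compatible with (A2) only when $p=q$.

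\textbf{Continuity and main obstacle.} Since $\sigma$ is Lipschitz, $D=\operatorname{dom}\sigma^*$ is bounded (as noted in the excerpt), so $p(t,x)\in\overline D$ for every $(t,x)$. For any sequence $(t_k,x_k)\to(t_0,x_0)$, compactness extracts a convergent subsequence $p(t_k,x_k)\to p^*$; continuity of $(t,q)\mapsto\sigma^*(q)+\int_0^t H(\tau,q)\,d\tau$ lets us pass to the limit in the maximizing inequality, so $p^*\in\ell(t_0,x_0)=\{p(t_0,x_0)\}$. Hence $p(\cdot,\cdot)$ is continuous, whence $D_x u$ and $u_t$ are continuous and $u\in C^1([0,t_1]\times\R^n)$. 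The principal obstacle is the backward propagation step: verifying $p_\lambda\in\ell(t_1,x_1^\lambda)$ requires separate, careful case-work under (A2), and extracting a genuine contradiction from single-valuedness only at the terminal time $t_1$ is subtle, since mere injectivity of $D\omega$ does not, on its face, prevent two distinct maximizers at $(t_0,x_0)$ whose characteristics split to different endpoints. The proof must fully exploit the convex-analytic rigidity of the Hopf-type formula in tandem with the monotonicity built into (A2).
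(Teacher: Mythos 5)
The paper itself offers no proof of this statement: it is imported verbatim from reference [7] (listed as ``submitted''), so there is nothing in-paper to compare your argument with. Judged on its own terms, your outline is sound at both ends --- the reduction of $C^1$ regularity to single-valuedness of $\ell(t,x)$ plus an envelope/Danskin computation, and the compactness argument for continuity of $(t,x)\mapsto p(t,x)$, are standard and correct. The problem is that the entire weight of the theorem rests on the middle ``backward propagation'' step, and that step as written does not go through.

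Three concrete gaps. First, the claim that the whole segment $[p,q]$ lies in $\ell(t_0,x_0)$ needs the function $q\mapsto\langle x_0,q\rangle-\sigma^*(q)-\int_0^{t_0}H(\tau,q)\,d\tau$ to be concave; under (A2) that holds only when $H(\tau,\cdot)$ is convex, and fails in the concave case and in case (b), where the argmax set need not be convex. Second, the assertion $p_\lambda\in\ell(t_1,x_1^\lambda)$ is equivalent to $\int_{t_0}^{t_1}\bigl[\langle H_p(\tau,p_\lambda),p_\lambda-q\rangle-H(\tau,p_\lambda)+H(\tau,q)\bigr]d\tau\ge0$ for all $q$, which is exactly the first-order convexity inequality for $H(\tau,\cdot)$; when $H(\tau,\cdot)$ is concave the inequality reverses --- maximizers propagate \emph{backward}, not forward, along characteristics --- so the step fails in half of the situations (A2) allows. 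Third, and this is the gap you flag yourself, even in the convex case where $p_\lambda=D\omega(x_1^\lambda)$ does hold, no contradiction follows from the monotonicity of $D\omega$: the resulting constraint $\langle p-q,\,x_1^p-x_1^q\rangle=\int_{t_0}^{t_1}\langle p-q,\,H_p(\tau,p)-H_p(\tau,q)\rangle\,d\tau\ge0$ is automatically satisfied because the gradient of the convex function $H(\tau,\cdot)$ is monotone, so the advertised ``sign constraint compatible with (A2) only when $p=q$'' never materializes. (In the convex case the natural source of a contradiction lies elsewhere: $\sigma\in C^1$ makes $\sigma^*$ essentially strictly convex, while $[p,q]\subseteq\ell(t_0,x_0)$ forces $\sigma^*+\int_0^{t_0}H(\tau,\cdot)\,d\tau$ to be affine on $[p,q]$, which is incompatible with convexity of $\int_0^{t_0}H(\tau,\cdot)\,d\tau$; note this exploits the smoothness of $\sigma$, not of $\omega$.) As it stands the proposal is a plan whose decisive step is missing, not a proof.
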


Let $(t_0,x_0)\in (t_1,T)\times \R^n.$  Suppose that the characteristic curve $\mathcal C$
$$x=x_0+\int_{t_0}^t H(\tau,\sigma_y(y))d\tau,$$
where $y\in \ell^*(t_0,x_0),$ intersects the plane $t=t_1$ at $(t_1,x_1).$  We have the following theorem.

\begin{thm}
Let $H(t,p)\in C^1([0,T]\times\R^n)$ and let $\sigma\in C^1(\R^n)$ be a convex Lipschitz function. Suppose that for some $t_1\in (0,T)$, the function $$\omega(x)=u(t_1,x)=(\sigma^*+\int_0^{t_1} H(\tau,\cdot)d\tau)^*(x)$$ is of class $C^1(\R^n).$ Then every characteristic curve $$\mathcal C:\ x=x(t,z)=z+\int_{t_1}^tH_p(\tau,\omega_z(z))d\tau,\ t\in [t_1,T]$$ of problem $(H,\omega)$ can be extended backward to form a characteristic curve of problem $(H,\sigma)$ starting at $(0,y)$, where $y= z-\int_0^{t_1} H_p(\tau,\omega_z(z))d\tau.$

Conversely, let $\mathcal C':\ x=x(t,y)=y+\int_0^tH(\tau,\sigma_y(y))d\tau$ be a characteristic curve of type (I) of the problem $(H,\sigma)$ on $[0,T]\times\R^n.$ Then the restriction of $\mathcal C'$ on $[t_1,T]$ is also a characteristic curve of problem $(H,\omega)$ on $[t_1,T]\times\R^n.$
\end{thm}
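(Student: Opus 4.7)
The plan is to exploit the $C^1$ regularity of $\omega$, which together with the Fenchel-type structure of the Hopf formula pins down a unique momentum $p$ at time $t_1$, identifiable simultaneously with $\omega_z(z)$ (for the problem $(H,\omega)$) and with $\sigma_y(y)$ for an appropriate $y$ (for the problem $(H,\sigma)$). Once this identification is established, the backward/forward extensions of the characteristics follow immediately by splitting the integrals at $t_1$.

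\textbf{Forward direction.} Fix the characteristic $x(t)=z+\int_{t_1}^t H_p(\tau,\omega_z(z))\,d\tau$ of $(H,\omega)$ and set $p:=\omega_z(z)$ and $y:=z-\int_0^{t_1}H_p(\tau,p)\,d\tau$. The function $\omega$ is by construction the Fenchel conjugate of $h(q):=\sigma^*(q)+\int_0^{t_1}H(\tau,q)\,d\tau$, hence convex; for any $q^\ast\in\ell(t_1,z)$ the standard supremum argument yields $q^\ast\in\partial\omega(z)$, so the $C^1$ hypothesis forces $\ell(t_1,z)=\{\omega_z(z)\}=\{p\}$. The first-order condition for $p$ to minimize the convex-plus-smooth function $\sigma^*(q)+\int_0^{t_1}H(\tau,q)\,d\tau-\langle z,q\rangle$, tested via one-sided directional derivatives of the convex part, gives $y\in\partial\sigma^*(p)$; equivalently $p\in\partial\sigma(y)=\{\sigma_y(y)\}$ since $\sigma\in C^1$. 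Hence the characteristic of $(H,\sigma)$ emanating from $y$ is $\tilde x(t)=y+\int_0^t H_p(\tau,\sigma_y(y))\,d\tau=y+\int_0^t H_p(\tau,p)\,d\tau$, which satisfies $\tilde x(t_1)=z$ and, for $t\in[t_1,T]$, splits as $z+\int_{t_1}^t H_p(\tau,p)\,d\tau=x(t)$, establishing the backward extension.

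\textbf{Converse direction.} Let $\mathcal{C}'$ be of type (I) and put $x_1:=y+\int_0^{t_1}H_p(\tau,\sigma_y(y))\,d\tau$. By definition of type (I) we have $\sigma_y(y)\in\ell(t_1,x_1)$, and the argument above applied at $(t_1,x_1)$ shows $\ell(t_1,x_1)=\{\omega_z(x_1)\}$, so $\sigma_y(y)=\omega_z(x_1)$. For $t\in[t_1,T]$ the integral formula for $\mathcal{C}'$ splits at $t_1$ as $x(t)=x_1+\int_{t_1}^t H_p(\tau,\omega_z(x_1))\,d\tau$, which is precisely the characteristic of $(H,\omega)$ starting from $x_1$.

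The main technical obstacle is the first-order optimality step in the forward direction, namely extracting $y\in\partial\sigma^*(p)$ from the maximization defining $\omega$, when neither $\sigma^*$ need be differentiable nor $H(\tau,\cdot)$ be assumed convex in $p$. This is handled by decomposing the maximand into the convex part $\sigma^*$ and a smooth remainder and passing to one-sided directional derivatives, after which Fenchel--Young and $\sigma\in C^1$ convert $y\in\partial\sigma^*(p)$ into the equality $p=\sigma_y(y)$ required to splice the two characteristic formulas.
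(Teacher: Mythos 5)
Your argument is correct and follows the same overall strategy as the paper: use the $C^1$ regularity of $\omega$ to show that the set $\ell(t_1,\cdot)$ of maximizers in the Hopf formula is the singleton $\{\omega_z(\cdot)\}$, thereby identifying the momentum of the $(H,\omega)$-characteristic with that of an $(H,\sigma)$-characteristic of type (I) through the same point, and then splice the two integral formulas at $t=t_1$. The one place you genuinely diverge is in how you produce the starting point $y$ in the forward direction. The paper simply invokes the existence of a type (I) characteristic of $(H,\sigma)$ passing through $(t_1,x_1)$ with momentum $q\in\ell(t_1,x_1)$, which rests on the previously quoted inclusion $\ell(t_0,x_0)\subset\sigma_y(\ell^*(t_0,x_0))$ (Proposition 4.1, citing reference [8]); you instead derive this from scratch by writing the maximand as a smooth function minus the convex function $\sigma^*$, extracting the first-order condition $y=z-\int_0^{t_1}H_p(\tau,p)\,d\tau\in\partial\sigma^*(p)$ from one-sided directional derivatives, and converting it via Fenchel--Young and $\sigma\in C^1$ into $p=\sigma_y(y)$. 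Your route is more self-contained (it does not presuppose the cited structural result on $\ell^*$) at the cost of a little convex-analytic bookkeeping; the paper's route is shorter but leans on external machinery. The converse directions are essentially identical. One small point worth making explicit in your write-up: the claim $\ell(t_1,z)=\{p\}$ needs $\ell(t_1,z)\neq\emptyset$, which holds here because $\sigma$ Lipschitz forces $\mathrm{dom}\,\sigma^*$ to be bounded, so the supremum in the Hopf formula is attained.
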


\begin{proof}
Let $ \mathcal C_1: x=x_1 +\int_{t_1}^tH_p(\tau,\omega_z(z))d\tau, t_1\le t\le T$, be a characteristic curve of the problem $(H,w)$ starting from $(t_1,x_1).$ Since $\omega(x)=u(t_1,x)$ is differentiable at $(t_1,x_1)$ under the assumptions made, we see that $\ell(t_1,x_1)$ is a singleton and $q=\omega_z(z) \in \ell(t_1,x_1).$ Let $\mathcal C_0: x=x_1 +\int_{t_1}^t H_p(\tau,q)d\tau, 0\le t\le t_1$, be the characteristic curve of type (I) of the problem $(H,\sigma)$ passing $(t_1,x_1).$ Then the curve $\mathcal C$ given by
$$ x=x(t) =\simuleq { & x_1 +\int_{t_1}^t H_p(\tau,q)d\tau,\ \ t\in [0,t_1] \\
& x_1 +\int_{t_1}^tH_p(\tau,\omega_z(z))d\tau,\ \ t\in [t_1,T],}$$
is exactly the characteristic curve starting from $y=x_1-\int_0^{t_1}H_p(\tau,q)d\tau$ of problem $(H,\sigma)$ on $[0,T]\times\R^n.$

Conversely, if $$\mathcal C':\ x=x(t,y)=y+\int_0^tH(\tau,\sigma_y(y))d\tau$$ is a characteristic curve of type (I) of problem $(H,\sigma)$ on $[0,T]\times\R^n$ starting from $y\in \R^n.$ Then $\sigma_y(y)=q\in \ell(t_1,x_1)$, where $x_1=y+\int_0^{t_1}H(\tau,\sigma_y(y))d\tau,$ and $q=\omega_z(x_1).$ Moreover, $\mathcal C'$ can be rewritten as $$x=x(t,y)=x_1 +\int_{t_1}^tH(\tau,\omega_y(y))d\tau,\ t\in [t_1, T],$$  that is a characteristic curve of the problem $(H,\omega)$ on $[t_1,T]\times\R^n$ starting at $(t_1,x_1).$  The theorem has been proved.
\end{proof}

\subsection{Singularity of viscosity solution}
Since a viscosity solution of the Cauchy problem (3.1)-(3.2) is a semiconvex function, we can apply the results on the propagation of singularity of a  semiconvex (or semiconcave) functions  established in [2, p.85].\vspace*{0.05in}

We will define the semidifferentials of a continuous function. Let $u=u(t,x): \ \Omega \to \R$ and let $(t_0,x_0)\in \Omega.$ For $(h,k)\in \R\times \R^n$, define
$$  \tau (p,q,h,k) =\frac{u(t_0+h, x_0+k)-u(t_0,x_0)-ph -\langle q,k\rangle}{\sqrt{|h|^2 +|k|^2}},$$
$$D^+u(t_0,x_0)=\{ (p,q)\in\R\times\R^{n}\, |\ \limsup_{(h,k)\to (0,0)} \tau (p,q,h,k) \ \le \ 0\}, $$
$$D^-u(t_0,x_0)=\{(p,q)\in \R\times\R^{n}\, | \ \liminf_{(h,k)\to (0,0)} \tau(p,q,h,k)\ \ge \ 0\},$$
where $p\in \R,\ q\in \R^n.$\vspace*{0.05in}

The set $D^+u(t_0,x_0)$ (resp. $D^-u(t_0,x_0)$) is called the {\it superdifferential} (resp. {\it subdifferential}) of $u(t,x)$ at $(t_0,x_0).$\vspace*{0.05in}

In what follows, we denote by $D^*u(t,x)\subset \R\times \R^n$ the set of all {\it reachable gradients} of $u(t,x)$ at $(t,x)$. That means $(p,q)\in D^*u(t,x) $ if and only if there exists a sequence $(t_k,x_k)_k\subset \Omega\setminus \{(t,x)\}$ such that $u(t,x)$ is differentiable at $(t_k,x_k)$ and,
$$(t_k,x_k)\to (t,x), \ (u_t(t_k,x_k), u_x(t_k,x_k))\to (p,q)\  \text{ as}\  k\to \infty.$$

If $u(t,x)$ is semiconvex, then $D^-u(t,x) =\text{co}\, D^*u(t,x),$ the convex hull of $D^*u(t,x)$; see [2, p.54]. \vspace*{0.05in}

Suppose that $u(t,x)$ is the Hopf-type formula (3.3) for the problem (3.1)-(3.2). Then it is a locally Lipschitz function on $\Omega.$\vspace*{0.05in}

Let $(t_0,x_0)\in \Omega.$ Define
$$\mathcal H(t_0,x_0)=\{(-H(t_0,q), q)\ | \ q\in \ell(t_0,x_0)\}.$$
Then a relationship between $D^*u(t_0,x_0)$ and the set $\ell (t_0,x_0)$ is given by the following theorem.

\begin{thm} {\rm([7])} Assume that condition {\rm (A2)} is satisfied on $[0, T]\times \R^n$. In case that $H(t,\cdot)$ is a concave function, we assume in addition that $H(t,\cdot)$ is strictly concave for a.e. $t$ in $(0,T).$ Let $u(t,x)$ be the Hopf-type formula for problem {\rm(3.1)-(3.2)}. Then for all $(t_0,x_0)\in \Omega,$ we have
$$D^*u(t_0,x_0)=\mathcal H(t_0,x_0).$$
\end{thm}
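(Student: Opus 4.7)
The plan is to prove the two inclusions separately. The direction $D^*u(t_0,x_0)\subset \mathcal H(t_0,x_0)$ is a soft consequence of the pointwise Hamilton-Jacobi equation and lower semicontinuity of $\sigma^*$, while the reverse inclusion $\mathcal H(t_0,x_0)\subset D^*u(t_0,x_0)$ requires producing approximating differentiability points along a characteristic and is the main obstacle. As preliminaries, I would use that $u$ is locally Lipschitz and semiconvex on $\Omega$ (by the arguments behind Theorem 3.6), hence differentiable almost everywhere, with the pointwise identity $u_t+H(t,u_x)=0$ holding at every differentiability point; and that $u(t,\cdot)$ is convex in $x$, being the Fenchel conjugate of $h_t(q):=\sigma^*(q)+\int_0^tH(\tau,q)d\tau$, so that at any such point $x$ the set $\ell(t,x)$ reduces to the singleton $\{u_x(t,x)\}$.

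For the first inclusion, take $(p,q)\in D^*u(t_0,x_0)$ with approximating differentiability points $(t_k,x_k)\to(t_0,x_0)$ satisfying $(u_t,u_x)(t_k,x_k)\to(p,q)$. The pointwise HJ equation yields $p=-H(t_0,q)$ in the limit. To show $q\in\ell(t_0,x_0)$, I would rewrite the Hopf identity at $(t_k,x_k)$ as
$$\sigma^*(u_x(t_k,x_k))=\langle x_k,u_x(t_k,x_k)\rangle-u(t_k,x_k)-\int_0^{t_k}H(\tau,u_x(t_k,x_k))d\tau,$$
whose right-hand side converges by continuity of $u$ and $H$ on the compact set $D=\mathrm{dom}\,\sigma^*$ in which the $u_x(t_k,x_k)$ remain trapped. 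Lower semicontinuity of $\sigma^*$ gives $\sigma^*(q)\le \lim\sigma^*(u_x(t_k,x_k))$, while the trivial Hopf lower bound $u(t_0,x_0)\ge \langle x_0,q\rangle-\sigma^*(q)-\int_0^{t_0}H(\tau,q)d\tau$ forces the reverse inequality; equality turns the limit Hopf identity into the statement $q\in\ell(t_0,x_0)$.

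For the reverse inclusion, fix $q_0\in\ell(t_0,x_0)$ and shoot the candidate characteristic from $(t_0,x_0)$: set $t_k=t_0+\varepsilon_k$ for $\varepsilon_k\downarrow 0$ and $x_k=x_0+\int_{t_0}^{t_k}H_p(\tau,q_0)d\tau$. The first-order optimality condition $x_0-\int_0^{t_0}H_p(\tau,q_0)d\tau\in\partial\sigma^*(q_0)$ encoding $q_0\in\ell(t_0,x_0)$ is unchanged at $(t_k,x_k)$ since $x_k-\int_0^{t_k}H_p(\tau,q_0)d\tau$ equals the same vector; hence $q_0$ is a critical point of the Hopf functional at $(t_k,x_k)$. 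Under case (A2)(a) with $H(t,\cdot)$ convex the functional is globally concave in $q$ and $q_0$ is automatically a maximizer; under case (A2)(b) the factored form reduces the verification to a monotonicity argument controlled by the sign of $g$; under the concave subcase of (A2)(a), the additional strict concavity hypothesis is precisely what promotes the critical point $q_0$ to the unique maximizer at $(t_k,x_k)$ for all small $\varepsilon_k$. Once this uniqueness is secured, $u(t_k,\cdot)$ is differentiable at $x_k$ with $u_x(t_k,x_k)=q_0$, and the pointwise HJ equation delivers $u_t(t_k,x_k)=-H(t_k,q_0)\to -H(t_0,q_0)$; this produces the required approximating sequence.

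The main obstacle is the uniqueness of the maximizer at $(t_k,x_k)$ in the strictly concave subcase: without the a.e.\ strict concavity of $H(t,\cdot)$, a companion maximizer could persist along the shooting curve, breaking differentiability at $(t_k,x_k)$ and thus the construction; the integrated strict concavity rules this out and is exactly why the hypothesis is built into the statement.
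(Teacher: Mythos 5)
The paper does not actually prove this statement: Theorem 4.4 is quoted from reference [7] without proof, so there is no internal argument to compare yours against and I can only assess the proposal on its own terms. Your first inclusion $D^*u(t_0,x_0)\subset\mathcal H(t_0,x_0)$ is sound: passing to the limit in the pointwise equation gives $p=-H(t_0,q)$, and the lower-semicontinuity argument that pins $q\in\ell(t_0,x_0)$ (using that the maximizers stay in the bounded set $\mathrm{dom}\,\sigma^*$ and that the Hopf formula gives the reverse inequality for free) is correct.

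The reverse inclusion contains a genuine error: you shoot the characteristic \emph{forward} ($t_k=t_0+\varepsilon_k$) in every case, but in the concave case --- precisely the case where the extra strict-concavity hypothesis enters, hence the heart of the theorem --- forward shooting fails. Along $x(t)=x_0+\int_{t_0}^tH_p(\tau,q_0)\,d\tau$, with $\phi_t(q)=\langle x(t),q\rangle-\sigma^*(q)-\int_0^tH(\tau,q)\,d\tau$, one has
$$\phi_{t_k}(q)-\phi_{t_k}(q_0)=\bigl[\phi_{t_0}(q)-\phi_{t_0}(q_0)\bigr]-\int_{t_0}^{t_k}\bigl[H(\tau,q)-H(\tau,q_0)-\langle H_p(\tau,q_0),q-q_0\rangle\bigr]\,d\tau,$$
and for concave $H(\tau,\cdot)$ the bracketed integrand is $\le 0$, so for $t_k>t_0$ the correction term is $\ge 0$ and works \emph{against} maximality of $q_0$: with $H(p)=-p^2$, $\mathrm{dom}\,\sigma^*=[-1,1]$, $x_0=0$ and $q_0=1\in\ell(t_0,0)=\{-1,1\}$, the forward-shifted functional has unique maximizer $-1$, not $q_0$. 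The correct construction in the concave case shoots \emph{backward}, $t_k=t_0-\varepsilon_k$, where the same identity (integral over $[t_k,t_0]$, opposite sign) together with a.e.\ strict concavity makes $q_0$ the unique maximizer at $(t_k,x_k)$. There are two further gaps even where the shooting direction is right: (i) in the convex case ``automatically a maximizer'' is not enough --- you need $\ell(t_k,x_k)$ to be a singleton, which forward shooting does not give without an additional argument (e.g.\ strict convexity of $\sigma^*$ inherited from $\sigma\in C^1$, or the observation that $\ell(t_k,x_k)\subset\ell(t_0,x_0)$ forces the extra maximizers to satisfy an affineness identity for $H$); and (ii) differentiability of $u(t_k,\cdot)$ in $x$ alone does not make $(t_k,x_k)$ eligible in the definition of a reachable gradient, which requires joint $(t,x)$-differentiability --- this should be closed by combining the already-proved inclusion $D^*u\subset\mathcal H$ with semiconvexity and $D^-u=\mathrm{co}\,D^*u$, so that $\ell(t_k,x_k)$ being a singleton forces $D^-u(t_k,x_k)$ to be a singleton. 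Finally, case (A2)(b) is asserted (``a monotonicity argument controlled by the sign of $g$'') rather than argued.
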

We also denote $\sum (u)$ the set of all singular points of a function $u(t,x)$ (i.e., the points at which $u(t,x)$ is not differentiable). Using an important result on the propagation of singularities of a semiconcave function presented in [2], we have the following:

\begin{thm}
Let $[0,T]=[0,t_1]\cup [t_1,T]$ and suppose that condition {\rm(A2)} is satisfied on each subdomain $[0,t_1]\times \R^n$ and $[t_1,T]\times \R^n.$ Let $(t_0,x_0)\in \sum (u)$ and assume that the $0-$level set of the function $g(p,q)=p+H(t_0,q)$ does not contain any line segment $[(p_1,q_1), (p_2,q_2)]\subset \R^{n+1}$, $(p_1,q_1)\neq (p_2,q_2)$.  Then there exists a Lipschitz arc ${\textbf x}:\ [0,\rho]\to \R^{n+1}$ with ${\textbf x}(0)=(t_0,x_0)$ and ${\textbf x}(s)\in \sum(u)$ for any $s\in  (0,\rho)$ and ${\textbf x}(s)\ne (t_0,x_0)$ for $s >0$ small enough.
\end{thm}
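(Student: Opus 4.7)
The plan is to reduce the statement to a propagation-of-singularities theorem for semiconcave functions from [2, p.85] by passing to $v := -u$. By Theorem 3.6, the layered viscosity solution $u$ is semiconvex on $\Omega$, hence $v$ is semiconcave on $\Omega$ and the singular sets coincide, $\sum(u) = \sum(v)$. Under this sign change, $D^{+}v(t_0,x_0) = -D^{-}u(t_0,x_0)$ and $D^{*}v(t_0,x_0) = -D^{*}u(t_0,x_0)$, so it is enough to verify the hypotheses of the cited propagation result for $v$ at $(t_0,x_0)$.

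Next I would show that every reachable gradient of $u$ at $(t_0,x_0)$ lies in the $0$-level set of $g$. By Theorem 3.3, $u$ is a viscosity solution of (3.1) on $\Omega$ and, being semiconvex, is locally Lipschitz, so Rademacher's theorem supplies differentiability points $(t,x) \in \Omega$ at which $u_t(t,x) + H(t, D_x u(t,x)) = 0$. Taking a sequence $(t_k,x_k) \to (t_0,x_0)$ of such points with $(u_t(t_k,x_k), D_x u(t_k,x_k)) \to (p,q)$ and using the continuity of $H$, we conclude $p + H(t_0,q) = 0$. Therefore $D^{*}u(t_0,x_0) \subset g^{-1}(0)$.

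Now I would exploit the semiconvexity characterization recalled in the excerpt, $D^{-}u(t_0,x_0) = \text{co}\, D^{*}u(t_0,x_0)$. Because $(t_0,x_0)$ is singular, this convex set is not a singleton, so $D^{*}u(t_0,x_0)$ contains at least two distinct points $(p_1,q_1), (p_2,q_2)$, both lying in $g^{-1}(0)$. The open segment joining them is contained in $D^{-}u(t_0,x_0)$; but by the no-line-segment hypothesis on $g^{-1}(0)$ it cannot lie entirely in $g^{-1}(0)$, so any interior point of this segment belongs to $D^{-}u(t_0,x_0) \setminus D^{*}u(t_0,x_0)$, which after negation produces elements of $D^{+}v(t_0,x_0) \setminus D^{*}v(t_0,x_0)$. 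Applying the propagation theorem from [2, p.85] to the semiconcave function $v$ at $(t_0,x_0)$ then yields a Lipschitz arc $\mathbf{x}:[0,\rho] \to \R^{n+1}$ with $\mathbf{x}(0) = (t_0,x_0)$, $\mathbf{x}(s) \in \sum(v) = \sum(u)$ for $s \in (0,\rho)$, and $\mathbf{x}(s) \ne (t_0,x_0)$ for all small $s > 0$.

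The main obstacle is calibrating the precise form of the hypothesis required by the cited theorem. The Albano--Cannarsa-type propagation results typically demand an \emph{exposed point} of $D^{+}v(t_0,x_0)$ lying outside $D^{*}v(t_0,x_0)$, rather than an arbitrary interior element. To extract such an exposed point I would take $(p_1,q_1), (p_2,q_2)$ to be extreme points of the compact convex set $D^{-}u(t_0,x_0)$ and produce a supporting hyperplane of $-D^{-}u(t_0,x_0)$ isolating an exposed face orthogonal to the direction of their difference; the no-line-segment assumption on $g^{-1}(0)$ would then guarantee that this face is disjoint from $D^{*}v(t_0,x_0)$. Once this geometric calibration is in place, the conclusion follows at once from the theorem in [2, p.85].
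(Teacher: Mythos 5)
Your overall strategy matches the paper's: pass to the semiconcave function $-u$, use $D^-u(t_0,x_0)=\mathrm{co}\,D^*u(t_0,x_0)$, observe that $D^*u(t_0,x_0)$ lies in the $0$-level set of $g$ (your Rademacher argument for this inclusion is a fine substitute for the paper's citation of Theorem 4.4), and invoke the propagation theorem of [2]. But there is a genuine gap at exactly the point you flag yourself: \cite[Theorem 4.2.2]{[2]} requires a point of $\partial D^-u(t_0,x_0)\setminus D^*u(t_0,x_0)$ (boundary of the subdifferential), and your main argument only produces a point of $D^-u(t_0,x_0)\setminus D^*u(t_0,x_0)$ in the relative interior of a segment, which is insufficient whenever $D^-u(t_0,x_0)$ has nonempty interior. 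Your proposed repair via extreme points and supporting hyperplanes does not close this: the face of $\mathrm{co}\,D^*u(t_0,x_0)$ exposed by a hyperplane orthogonal to $(p_1,q_1)-(p_2,q_2)$ may well reduce to a single extreme point, and extreme points of $\mathrm{co}\,D^*u(t_0,x_0)$ belong to the closed set $D^*u(t_0,x_0)$; nothing in the no-segment hypothesis on $g^{-1}(0)$ prevents this, so the claimed disjointness of the exposed face from $D^*u(t_0,x_0)$ is unsubstantiated.

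The paper closes the gap with a different and more direct device: maximize $g(p,q)=p+H(t_0,q)$ itself over the compact convex set $D^-u(t_0,x_0)$. Since $D^*u(t_0,x_0)$ is contained in $g^{-1}(0)$, has at least two points, and $g^{-1}(0)$ contains no segment, the maximum $\alpha$ is strictly positive, so the maximizer $(p_0,q_0)$ cannot be a reachable gradient; and because $g$ is strictly increasing in the $p$-coordinate, $(p_0+1/n,q_0)$ has $g$-value exceeding $\alpha$ and hence lies outside $D^-u(t_0,x_0)$, which forces $(p_0,q_0)\in\partial D^-u(t_0,x_0)$. This single maximization simultaneously delivers membership in the boundary and exclusion from $D^*u(t_0,x_0)$, which is precisely the step your argument is missing. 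If you replace your exposed-point sketch with this maximization, your proof goes through along the same lines as the paper's.
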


\begin{proof}
Note that the set $\ell(t_0,x_0)$ is compact. Thus, the set
$$D^-u(t_0, x_0)={\text{\rm co}}D^*u(t_0,x_0) ={\text {\rm co}}\mathcal H(t_0, x_0)$$
is also compact. It follows that there exists $(p_0, q_0)\in D^-u(t_0,x_0)$ such that
\begin{equation*}
g(p_0, q_0)=\max_{(p, q)\in D^-u(t_0,x_0)}g(p, q).
\end{equation*}
Since the $0-$level set of function $g(p,q)$ does not contain any line segment and $D^-u(t_0, x_0)$ is not a singleton as $(t_0,x_0)\in \sum (u)$,
$$\displaystyle\max_{(p, q)\in D^-u(t_0,x_0)}g(p,q)=\max_{(p, q)\in D^-u(t_0,x_0)}(p+H(t_0,q))=p_0+H(t_0,q_0)=\alpha >0.$$
 Thus, $(p_0,q_0)\in D^-u(t_0,x_0)\setminus D^*u(t_0,x_0).$ For any $n\in \N,$ we see that $$1/n +p_0 +H(t_0,q_0) >\alpha.$$ Therefore, $(p_0+1/n, q_0)\notin D^- u(t_0,x_0).$ This means that $$(p_0,q_0)\in \partial D^-u(t_0,x_0)\setminus D^*u(t_0,x_0).$$ Applying \cite[Theorem 4.2.2]{[2]}, we obtain the desired result.
\end{proof}
\begin{rem} 1. From Theorem 4.5, we do not know whether the singular point propagates forward or backward in time.\vspace*{0.05in}

2. If we use method of characteristics or Hopf-type formula to define generalized solutions of the Cauchy problem of Hamilton-Jacobi equation, we see that the singular points of solutions may propagate forward in time $t.$  Nevertheless, if the structure of the Hamiltonian (convexity, for example) changes in time, then the singularities of solution may cease.\vspace*{0.05in}

{\bf Example.} Consider the following Cauchy problem:

   $$ \simuleq{u_t +(2t-1)u_x^2 \ =\  & 0, \ (t,x)\in (0, 2)\times \R, \\
    u(0,x)\ =\ & \ |x|,\ \ x\in \R.}$$

Since the Hamiltonian $H(t,p) =(2t-1)p^2$, as a function of $p,$ is concave if $t\in [0,1/2] $ and convex if $t\in [1/2, 2],$ the Hopf-type formula of the problem
$$u(t,x)=\max_{p\in [-1,1]}\{xp-(t^2-t)p^2\}$$ is not a viscosity solution of the problem; see [7].\vspace*{0.05in}

On the other hand, applying Theorem 3.3, we obtain a viscosity solution of the problem as follows:
$$u(t,x)=\simuleq{&\max_{p\in [-1,1]}\{xp-(t^2-t)p^2\}, \ \ (t,x)\in [0,\frac12]\times\R,\\
 & \max_{p\in [-1,1]}\{xp-(t^2-t)p^2\}+\frac 14,\  \ (t,x)\in [\frac12, 2]\times\R.}$$
A simple calculation gives us
$$u(t,x) =\simuleq{ &|x| -(t-\frac 12)^2 +\frac 14,  \ \ (t,x)\in [0,\frac 12]\times\R,\\
&\frac{x^2}{4(t-\frac12)^2} +\frac 14, \ \ |x|\le 2(t-\frac 12)^2,\ \frac 12 <t\le 2,\\
& |x|-(t-\frac 12)^2 +\frac14, \ \ |x| \ge 2(t-\frac12)^2,\ \frac 12 < t\le 2.}$$
At the points $(\frac12,x)$, we have
$u(\frac12,x)=|x|+\frac14,\ \ x\in \R.$\vspace*{0.05in}

It is obvious that $(\frac12,0)$ is a singular point of the solution and $\ell (\frac12,0)=\{-1,1\}.$ Therefore, $D^*u(\frac12,0)=\{(0,-1),(0,1)\}$ and $D^-u(\frac12,0)=\{0\}\times [-1,1].$ Then $$\partial D^-u(\frac12,0)\setminus D^*u(\frac12,0)=\{0\}\times(-1,1) \ne \emptyset.$$ By Theorem 4.5, the singularity of $u(t,x),$  say, at $(\frac 12, 0)$ propagates. Nevertheless, for $t\in (\frac 12, 2]$, the Hamiltonian is strictly convex. Thus, the solution of the problem is of $C^1((\frac 12,2]\times\R).$ This proves that the singularity of the solution propagates backward.
\end{rem}


\end{document}